\DeclareMathOperator{\End}{End}
\DeclareMathOperator{\tr}{tr}
\DeclareMathOperator{\ad}{ad}
\newcommand{\A}{{\mathcal A}}
\newcommand{\B}{{\mathcal B}}
\newcommand{\calC}{{\mathcal C}}
\newcommand{\E}{{\mathcal E}}
\newcommand{\F}{{\mathcal F}}
\newcommand{\G}{{\mathcal G}}
\newcommand{\I}{{\mathcal I}}
\newcommand{\J}{{\mathcal J}}
\newcommand{\K}{{\mathcal K}}
\renewcommand{\H}{{\mathcal H}}
\renewcommand{\P}{{\mathcal P}}
\renewcommand{\ll}{{\mathbf l}}
\newcommand{\rr}{{\mathbf r}}
\newcommand{\N}{{\mathbb N}}
\newcommand{\Ncal}{{\mathcal N}}
\newcommand{\C}{\ensuremath{\mathbb{C}}}
\newcommand{\R}{\ensuremath{\mathbb{R}}}
\newcommand{\p}{\partial}
\newcommand{\s}{{\rm Symb}}
\newcommand{\D}{{\mathcal D}}
\newcommand{\unit}{{\bf{1}}}
\newcommand{\U}{{\mathcal U}}
\newcommand{\V}{{\mathcal V}}
\newcommand{\W}{{\mathcal W}}
\newcommand{\x}{{x_0}}
\newcommand{\X}{{\mathcal X}}
\newcommand{\Z}{\ensuremath{\mathbb{Z}}}
\newcommand{\lk}{{\langle z \rangle}}
\newcommand{\rk}{{\langle \bar z \rangle}}
\newcommand{\ba}{\begin{eqnarray*}}
\newcommand{\ea}{\end{eqnarray*}}
\newtheorem{definition}{Definition}[section]
\newtheorem{lemma}{Lemma}[section]
\newtheorem{proposition}{Proposition}[section]
\newtheorem{theorem}{Theorem}[section]
\newtheorem{corollary}{Corollary}[section]
\begin{document}

\title[The algebra of distributions]
{An algebra of distributions related to a star product with separation of variables}
\author[Alexander Karabegov]{Alexander Karabegov}
\address[Alexander Karabegov]{Department of Mathematics, Abilene
Christian University, ACU Box 28012, Abilene, TX 79699-8012}
\email{axk02d@acu.edu}

\subjclass[2010]{53D55, 81Q20}
\keywords{deformation quantization, formal oscillatory integral}

\maketitle

\begin{abstract}
Given a star product with separation of variables $\star$ on a pseudo-K\"ahler manifold $M$ and a point $\x \in M$, we construct an associative algebra of formal distributions supported at $\x$. We use this algebra to express the formal oscillatory exponents of a family of formal oscillatory integrals related to the star product~$\star$.
\end{abstract}

\section{Introduction}

A classical mechanical system can be described by the Poisson algebra $C^\infty(M)$ of complex-valued functions on a Poisson manifold $M$ with two operations, the pointwise product $f,g \mapsto fg$ and the Poisson bracket $f,g \mapsto \{f,g\}$. A quantum mechanical system can be described by an algebra of operators acting on a Hilbert space $H$. The corresponding two operations are the noncommutative product of operators, $A,B \mapsto AB$, and the commutator $A,B \mapsto [A,B]=AB-BA$. If $M$ is a symplectic manifold with the Poisson structure induced by the symplectic structure, a quantization of the classical mechanical system on $M$ is given by a family of Hilbert spaces $\{H_h\}$ parametrized by a small parameter $h$ and a family of operator algebras acting on the respective spaces $H_h$ which exhibits a semiclassical behavior and approximates the Poisson algebra $(C^\infty(M), \{\cdot,\cdot\})$ as $h \to 0$. 

There are two different formalisms relating classical and quantum systems in terms of operator symbols. In one approach, one considers for each $h$ a noncommutative algebra $\A_h \subset C^\infty(M)$ with product $\ast_h$ and a representation $f \mapsto \mathrm{Op}_h (f)$ of $\A_h$ in $H_h$.  The Correspondence Principle requires that
\[
\lim_{h \to 0} f \ast_h g = fg \mbox{ and } \lim_{h \to 0} h^{-1}(f \ast_h g - g \ast_h f) = \{f,g\}.
\footnote{One usually assumes that $h$ is purely imaginary or inserts the imaginary unit $i$ in front of  $\{f,g\}$ if $h$ is assumed to be real.}
\]
In this approach, $f$ is called a symbol of the operator $\mathrm{Op}_h(f)$. In a stronger form, one requires the existence of a full asymptotic expansion of the composition of symbols as $h \to 0$,
\[
f \ast_h g \sim fg + h C_1(f,g) + h^2 C_2(f,g)+ \ldots,
\]
such that the operators $C_r$ are bidifferential, $C_1(f,g)-C_1(g,f) = \{f,g\}$, and there is an associative product $\star$ on the space $C^\infty(M)[[\nu]]$ of formal series in the formal parameter $\nu$ given by the formula
\begin{equation}\label{E:star}
f \star g = fg + \sum_{r=1}^\infty \nu^r C_r(f,g).
\end{equation}
The product $\star$ on $M$ is called a star product. The concept of a star product on a Poisson manifold was introduced in \cite{BFFLS}. 

Using this approach, one obtains eponymous star products from $pq$-,\\
 $qp$-, Wick, anti-Wick, and Weyl operator symbols and various constructions of star products on cotangent bundles (see \cite{BNW}, \cite{P}). 

In the other approach, one considers two symbol mappings. One maps a function $f$ on $M$ to an operator $\mathrm{Op}_h(f)$ on $H_h$ and $f$ is called a contravariant symbol of $\mathrm{Op}_h(f)$. The other one maps an operator $A$ on $H_h$ to a function $S_h(A)$ on $M$ called the covariant symbol of $A$. In this approach the existence of composition of symbols is not required and the star product is extracted from the following two mappings. The mapping
\[
B_hf = S_h (\mathrm{Op}_h(f))
\]
which maps a contravariant symbol to the corresponding covariant symbol is called the Berezin transform. The mapping
\[
Q_h(f,g) = S_h(\mathrm{Op}_h(f) \mathrm{Op}_h(g))
\]
is a twisted product of contravariant symbols which is not associative. It is assumed that there exist the asymptotic expansions
\ba
B_h f \sim f + h B_1f + h^2 B_2f + \ldots \quad \mbox{ and } \\
Q_h(f,g) \sim fg + hQ_1(f,g)+ h^2 Q_2(f,g)+\ldots,
\ea
as $h \to 0$ such that $B_r$ are differential operators and $Q_r$ are bidifferential operators on $M$ and the formal operators
\[
Bf = f + \sum_{r=1}^\infty \nu^r B_r f\mbox{ and } Q(f,g) = fg + \sum_{r=1}^\infty \nu^r Q_r(f,g)
\]
give rise to two star products,
\begin{equation}\label{E:twostar}
f \star g = Q(B^{-1}f, B^{-1}g) \mbox{ and } f \star'  g = B^{-1}Q(f,g).
\end{equation}
The operator $B = 1 + \nu B_1 + \nu^2 B_2 + \ldots$ is called the formal Berezin transform. In general, the Berezin transform $B_h$ is not invertible, but, by assumption, $B$ is invertible. This formalism (sometimes in a different but equivalent form) is used in the construction of the Berezin and Berezin-Toeplitz star products on K\"ahler manifolds and on general symplectic manifolds (see \cite{BMS}, \cite {E}, \cite{KS}, \cite{G}, \cite{Ch}). 

It should be noted that the most important constructions of star products of Fedosov \cite{F1} and Kontsevich \cite{K} do not use operator symbols.

The formulas for composition of some types of symbols, the Berezin transforms, and twisted products are given by oscillatory or Laplace-type integrals depending on $h$ which have asymptotic expansions as $h \to 0$. Such asymptotic expansions can be described in terms of formal oscillatory integrals (FOIs) which are given by formal oscillatory integral kernels (see details in Section \ref{S:foi}). Such kernels were explicitly calculated for the Berezin-Toeplitz star product on an arbitrary compact K\"ahler manifold and were instrumental in the complete identification of this star product in \cite{KS}. We expect that the information encoded in the formal oscillatory integral kernels can be used to describe and parametrize special classes of star products, in particular the Berezin-Toeplitz star product on general symplectic manifolds.

In this paper we develop tools allowing to express formal oscillatory integral kernels related to a star product in terms of that star product. We apply these tools to the well-understood case of the Berezin and Berezin-Toeplitz star products on K\"ahler manifolds hoping to apply them to more general star products in the future work.

In Section \ref{S:foi} we define formal oscillatory integrals and describe their main properties. In Section \ref{S:star} we recall basic facts on star products. In Section \ref{S:sep} we review the star products with separation of variables on pseudo-K\"ahler manifolds. In Sections \ref{S:algb} and \ref{S:algc} we construct two auxiliary algebras that are used in Section \ref{S:algd} to construct the algebra of distributions from the title of this paper. Then we give a formula which expresses the formal oscillatory exponents of a family of formal oscillatory integrals related to a star product with separation of variables in terms of this algebra of distributions. We expect that an analogous formula will be valid for the Berezin and Berezin-Toeplitz star products on general symplectic manifolds.

\section{Formal oscillatory integrals}\label{S:foi}

Given a vector space $V$, we denote by $V((\nu))$ the space of $\nu$-formal vectors
\begin{equation}\label{E:fvect}
    v =  \nu^r v_r + \nu^{r+1} v_{r+1} + \ldots,
\end{equation}
where $r \in \Z$ and $v_k \in V$ for all $k \geq r$. The subspace $V[[\nu]] \subset V((\nu))$ consists of the vectors (\ref{E:fvect}) with $r =0$.

In this paper we use the $\nu$-formal extensions of the vector spaces of functions, jets of infinite order of functions at a given point, differential operators, and distributions supported at a point. 

Given a manifold $M$ of dimension $n$ and a point $\x\in M$, we denote by $\J_\x$ the space of jets of infinite order at $\x$. It is defined as follows. Consider the vanishing ideal $\I_\x=\{ f \in C^\infty(M)| f(\x)=0\}$ of the point $\x$ in the algebra $C^\infty(M)$. Then
\[
\J_\x := C^\infty(M) / \bigcap\displaylimits_{r=1}^\infty \I_\x^r.
\]
The jet of a function $f\in C^\infty(M)$ at $x$ is the coset $f+\bigcap\displaylimits_{r=1}^\infty \I_\x^r$. In local coordinates $\{x^i\}$ around $\x$ (so that $x^i(\x)=0$), $\J_\x$ can be identified with $\C[[x^1, \ldots, x^n]]$. Namely, 
the jet of infinite order of a function $f$ at $\x$ is identified with the Maclaurin series of $f$, which is a formal series in the variables $x^i$. All jets in this paper are of infinite order.

Formal oscillatory integrals (FOIs) were introduced in \cite{KS} and developed further in \cite{LMP10}, where it was shown  that there exists a formal algebraic counterpart of an oscillatory integral with a complex phase function on a manifold $M$. 

Let $\x\in M$ be a fixed point, $\varphi = \nu^{-1} \varphi_{-1} + \varphi_0 + \ldots$ be a $\nu$-formal complex phase function on~$M$ such that $\x$ is a nondegenerate critical point of $\varphi_{-1}$ with zero critical value, $\varphi_{-1}(\x)=0$, and $\rho = \rho_0 + \nu \rho_1 + \ldots$ be a $\nu$-formal complex volume form (density) on $M$ such that $\rho_0$ does not vanish at $\x$. We call $(\varphi, \rho)$ {\it a phase-density pair at $\x$}. We will be interested only in the jets of $\varphi$ and $\rho$ at $\x$. Two pairs, $(\varphi, \rho)$ and $(\hat\varphi, \hat\rho)$, at $\x$ are called equivalent if there exists a formal function $u = u_0 + \nu u_1 + \ldots$ on a neighborhood of $\x$ such that
\[
    \hat\varphi = \varphi+ u \mbox{ and } \hat \rho = e^{-u} \rho. 
\]
We can write the equivalence class of pairs $(\varphi, \rho)$ as
\begin{equation}\label{E:fosck}
   e^{\varphi}  \rho.
\end{equation}

\begin{definition}\label{D:FOI}
Given a pair $(\varphi, \rho)$ on a manifold $M$ at $\x \in M$, a formal distribution $\Lambda = \Lambda_0 + \nu \Lambda_1 + \ldots$ on $M$ supported at $\x$ is called a formal oscillatory integral (FOI) associated with the pair $(\varphi, \rho)$ if $\Lambda_0 \neq 0$ and 
  \begin{equation}\label{E:FOI}
       \Lambda\left(vf + (v\varphi + \mathrm{div}_\rho v) f\right)=0
  \end{equation}
 for any vector field $v$ and any function $f$ on $M$.
\end{definition}
Here $\mathrm{div}_\rho v = \mathbb{L}_v \rho/\rho$ is the divergence of the vector field $v$ with respect to $\rho$ and $\mathbb{L}_v$ is the Lie derivative with respect to $v$. 

\medskip

{\it Example} If $\psi$ is a real phase function on $\R^n$ with a nondegenerate critical point $\x$ with zero critical value, $\psi(\x)=0$, $f$ is an amplitude supported on a small neighborhood of $\x$, and $h$ is a purely imaginary numerical parameter such that $ih>0$, then, according to the stationary phase method, there is an asymptotic expansion
\[
h^{-\frac{n}{2}} \int_{\R^n} e^{h^{-1} \psi(x)} f(x)\, dx \sim \Lambda_0(f)  + h\Lambda_1(f) + h^2 \Lambda_2(f) + \ldots
\]
as $h \to 0$, where $\Lambda_r, r \geq 0$, are distributions supported at $\x$ (see \cite{L}). The formal distribution $\Lambda = \Lambda_0+\nu\Lambda_1+\nu^2\Lambda_2+\ldots$ is a FOI at $\x$ associated with the pair $(\nu^{-1}\psi, dx)$.

\medskip

Heuristically, the $\nu$-formal distribution $\Lambda$ in Definition \ref{D:FOI} gives an interpretation of the formal expression
 \begin{equation}\label{E:Lie}
    \Lambda(f) = \nu^{-\frac{n}{2}} \int_M e^\varphi f\, \rho,
 \end{equation}
where $n = \dim M$ and $f$ is an amplitude.  Condition (\ref{E:FOI}) corresponds to the formal property of (\ref{E:Lie}) that
\[
\nu^{-\frac{n}{2}} \int_M \mathbb{L}_v(e^\varphi f\, \rho)=0.
\]
As shown in \cite{LMP10},
\[
       \Lambda_0 = \alpha \delta_\x,
\]
where $\alpha$ is a nonzero complex constant and $\delta_\x$ is the Dirac distribution at $\x$, $\delta_\x(f)=f(\x)$, which agrees with the stationary phase lemma. For any pair $(\varphi, \rho)$ there exists an associated FOI which is determined up to a formal multiplicative complex constant $c(\nu) = c_0 + \nu c_1 + \ldots$ with $c_0\neq 0$. In particular, there is a unique such FOI $\Lambda$ for which $\Lambda(1)=1$. If a FOI is associated with a pair $(\varphi, \rho)$, then it is associated with any equivalent pair. Thus, $\Lambda$ is associated with the oscillatory kernel (\ref{E:fosck}).

Given a pair $(\varphi, \rho)$ and a $\nu$-formal volume form $\hat\rho=\hat\rho_0 + \nu \hat\rho_1 + \ldots$ such that $\hat\rho_0$ does not vanish at $\x$, there exists a formal phase function $\hat\varphi$ such that the pairs $(\varphi, \rho)$ and $(\hat\varphi, \hat\rho)$ are equivalent. Thus, to compare two equivalence classes of phase-density pairs, one can assume that they share the same density.

It is clear from the definition that a FOI at $\x$ associated with a pair $(\varphi, \rho)$ depends only on the jets  of $\varphi$ and $\rho$ at $\x$. It was shown in \cite{LMP10} that if a FOI $\Lambda$ at $\x$ is associated with pairs $(\varphi, \rho)$ and $(\hat\varphi, \rho)$ with the same volume form $\rho$, then the jet of $\hat\varphi-\varphi$ at $\x$ is a $\nu$-formal constant. This result is based on the following important observation. Given a FOI $\Lambda$ at $\x$, consider a pairing on $C^\infty(M)[[\nu]]$ given by the formula
 \begin{equation}\label{E:pairl}
    (f, g)_{\Lambda} :=\Lambda(f\cdot g).
 \end{equation}
This pairing depends only on the jets of $f$ and $g$ at $\x$ and therefore it induces a pairing on the space $\J_\x[[\nu]]$ of $\nu$-formal jets at $\x$. The following statement was proved in \cite{LMP10}:
\begin{lemma}\label{L:nondeg}
The induced pairing on $\J_\x[[\nu]]$ is nondegenerate. 
\end{lemma}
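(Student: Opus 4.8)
The plan is to fix local coordinates $\{x^i\}$ centered at $\x$, identify $\J_\x$ with $\C[[x^1,\dots,x^n]]$, and regard $(f,g)_\Lambda=\Lambda(fg)$ as a symmetric $\C[[\nu]]$-bilinear form on $\J_\x[[\nu]]$. Since it is symmetric, nondegeneracy amounts to producing, for every $f\neq0$, some $g$ with $(f,g)_\Lambda\neq0$. The organizing device is the \emph{weight} valuation $w$ on monomials, $w(\nu^r x^\alpha)=2r+|\alpha|$, extended to $F=\sum_{r,\alpha}c_{r,\alpha}\nu^r x^\alpha$ by $w(F)=\min\{2r+|\alpha|:c_{r,\alpha}\neq0\}$. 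For each fixed $d$ only finitely many monomials have weight $d$, so every nonzero $F$ has a well-defined nonzero \emph{symbol} $\sigma(F)$, the sum of its weight-$w(F)$ terms. This weight encodes the Gaussian scaling $x^i\sim\nu^{1/2}$ dictated by the stationary phase method.

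The technical heart, which I expect to be the main obstacle, is the estimate
\[
 v_\nu\big(\Lambda(x^\gamma)\big)\ \ge\ \tfrac12|\gamma|\qquad\text{for all }\gamma,
\]
equivalently the statement that each $\Lambda_k$, being a distribution supported at $\x$, is a differential operator of order $\le 2k$ applied to $\delta_\x$, so that $\Lambda_k(x^\gamma)=0$ for $|\gamma|>2k$. I would derive this from the defining relation (\ref{E:FOI}). Taking $v=\partial_i$ and $f=x^\gamma$ and writing $\partial_i\varphi_{-1}=\sum_j H_{ij}x^j+O(|x|^2)$ with $H=\big(\partial_i\partial_j\varphi_{-1}(\x)\big)$ the \emph{nondegenerate} Hessian, the relation becomes a recursion expressing $\Lambda$ on $\sum_j H_{ij}x^{\gamma+e_j}$ through $\nu\Lambda(x^{\gamma-e_i})$ and higher-weight corrections; inverting $H$ yields the order bound and, simultaneously, identifies the leading coefficient $\Lambda_{|\gamma|/2}(x^\gamma)$ (for $|\gamma|$ even), up to a fixed nonzero scalar, with the Gaussian moment $\langle x^\gamma\rangle_{H^{-1}}$. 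The subtlety is that the correction terms \emph{raise} the $x$-degree, so the recursion is not triangular in $|\gamma|$; the bound must therefore be organized as an induction on the $\nu$-order $k$ controlling all $\gamma$ at once. (This structural property of FOIs is established in \cite{LMP10}; I would either cite it or reconstruct it as above. It is also exactly what makes $(f,g)_\Lambda$ well defined in $\C[[\nu]]$.)

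Granting the estimate, a direct count gives $v_\nu\big((f,g)_\Lambda\big)\ge\tfrac12\big(w(f)+w(g)\big)$, and, when $w(f)+w(g)$ is even, the coefficient of $\nu^{(w(f)+w(g))/2}$ in $(f,g)_\Lambda$ depends only on the symbols and equals the \emph{Gaussian pairing} $\langle\sigma(f),\sigma(g)\rangle$ obtained by dropping the $\nu$-powers and pairing $x^\alpha,x^\beta\mapsto c\,\langle x^{\alpha+\beta}\rangle_{H^{-1}}$ for a fixed nonzero $c$. This Gaussian form on $\C[x]$ is nondegenerate for every invertible symmetric $H$: Wick's theorem diagonalizes it in the Hermite/Appell basis with nonzero diagonal, and its top-degree part is $\mathrm{Sym}^d(H^{-1})$, which is invertible. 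The scalar $c$ does not affect this.

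Finally I would pass from the graded statement to the original one. Given $f\neq0$ of weight $a$, the polynomial $P$ read off from $\sigma(f)$ is nonzero, so nondegeneracy of the Gaussian pairing supplies a monomial $x^\beta$ (automatically with $|\beta|\equiv a\bmod2$) such that $\langle\sigma(f),x^\beta\rangle\neq0$. Taking $g=x^\beta$, the computation above shows that the coefficient of $\nu^{(a+|\beta|)/2}$ in $(f,g)_\Lambda$ is precisely this nonzero number, with no other contribution at that order; hence $(f,g)_\Lambda\neq0$. As $f\neq0$ was arbitrary, the induced pairing on $\J_\x[[\nu]]$ is nondegenerate.
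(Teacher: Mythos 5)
Your proposal is correct, but it takes a genuinely different route from the paper's. The paper's proof (it cites \cite{LMP10}, where the statement appears as the nondegeneracy proposition for FOIs) is a direct ``kernel is zero'' argument: assuming $\Lambda(fg)=0$ for all $g$, one picks a pair $(\psi,dx)$ associated with $\Lambda$, so that $\Lambda\left(\frac{\p h}{\p x^i}+\frac{\p\psi}{\p x^i}h\right)=0$ for all $h$, and then runs an induction over auxiliary monomials $Z_{i_1\ldots i_k}$ built from partial derivatives of $\psi$, transferring derivatives one at a time from these monomials onto $f$ until one obtains $\Lambda\left(\frac{\p^r f}{\p x^{i_1}\cdots\p x^{i_r}}\right)=0$ for all $r$ and all indices; extracting the leading $\nu$-coefficient via $\Lambda_0=\alpha\delta_\x$ with $\alpha\neq 0$ then annihilates the jet of the leading term of $f$. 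That argument never estimates the orders of the $\Lambda_k$ and uses the nondegeneracy of the critical point only through the already-established fact $\Lambda_0=\alpha\delta_\x$. You instead prove a quantitative, graded refinement: the order bound $\mathrm{ord}\,\Lambda_k\le 2k$, the ensuing valuation estimate $v_\nu\big(\Lambda(fg)\big)\ge\tfrac12\big(w(f)+w(g)\big)$, and the identification of the critical-diagonal values $\Lambda_{|\gamma|/2}(x^\gamma)$ with Gaussian moments (the recursion you get from the axiom on the critical diagonal is exactly the Gaussian integration-by-parts recursion with covariance $-\nu H^{-1}$, so the ``fixed scalar $c$'' is really a sign per degree, which is harmless), after which nondegeneracy reduces to that of the formal Gaussian pairing on polynomials. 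Your route is heavier but buys strictly more: it computes the leading symbol of the pairing in the stationary-phase grading, which the paper's proof does not see. Two points you should tighten if you write this up: first, the downward induction in degree at fixed $\nu$-order $k$ that produces $\mathrm{ord}\,\Lambda_k\le 2k$ needs a seed, namely the standard fact that each $\Lambda_k$, being a distribution supported at a point, has \emph{some} finite order, so that it vanishes on all monomials of sufficiently high degree before you descend to degree $2k+1$; second, the side remark that the ``top-degree part is $\mathrm{Sym}^d(H^{-1})$'' does not by itself yield nondegeneracy, since Wick matchings with internal contractions mix degrees of the same parity, but the Hermite/Appell diagonalization you also invoke (after a complex linear change of variables reducing $H^{-1}$ to the identity) is sound, so that remark can simply be dropped.
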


\section{General properties of star products}\label{S:star}

Let $\star$ be any star product on a Poisson manifold $M$. Since $\star$ is given by bidifferential operators, it can be restricted to any open subset of~$M$. 

We assume that the unit constant 1 is the unity of $\star$, so that $f \star 1 = f = 1 \star f$ for any $f$.

We denote by $L_f$ the left star-multiplication operator by a function $f$ and by $R_g$ the right star-multiplication operator by $g$. Then $L_f g = f \star g = R_g f$. The associativity of $\star$ is equivalent to the requirement that $L_f$ commute with $R_g$ for all $f,g$.

Star products $\star_1$ and $\star_2$ on $M$ are called equivalent if there exists a formal differential operator $T=1+\nu T_1 + \nu^2 T_2 + \ldots$ on $M$ such that $Tf \star_1 Tg=T(f \star_2 g)$.

\medskip

{\it Example} The star products $\star$ and $\star^\prime$ in (\ref{E:twostar}) are equivalent with the formal Berezin transform $B$ being an equivalence operator.

\medskip

Kontsevich constructed a star product on $\R^n$ equipped with an arbitrary Poisson structure in \cite{K}. He showed that star products exist on arbitrary Poisson manifolds and gave an explicit parametrization of their equivalence classes. On symplectic manifolds Fedosov constructed star products in each equivalence class in \cite{F1} and \cite{F2}. 

Let $M$ be a symplectic manifold with symplectic form $\omega_{-1}$ and $\star$ be any star product on $M$.  There exists a globally defined $\nu$-formal density $\mu$ on $M$ with the trace property
\[
\int_M f \star g \, \mu = \int_M g \star f \, \mu
\]
for any functions $f$ and $g$ such that $fg$ is compactly supported (see, say, \cite{LMP3} and \cite{GR2}). If $M$ is connected, the global trace density is unique up to a nonzero formal multiplicative constant. There exists a canonical normalization of the trace density which is used in the statement of the algebraic index theorem first proved in \cite{F2}.

A star product (\ref{E:star}) on a Poisson manifold is called natural in \cite{GR} if for each $r\geq 1$ the bidifferential operator $C_r$ is of order not greater than $r$ in both arguments. All classical star products are natural. The natural star products are strongly related to the Lagrangian asymptotic analysis and have many important properties. To describe these properties, we need several definitions.

\begin{definition}
A formal differential operator $X=X_0+\nu X_1 + \ldots$ on a manifold $M$ is called natural if for every $r\geq 0$ the differential operator $X_r$ is of order not greater than $r$. A formal differential operator $A=A_0+\nu A_1 + \ldots$ on $M$ is called oscillatory if $A= \exp \nu^{-1}X$, where $X$ is a natural operator such that $X_0=X_1=0$. Then $A_0=1$ and $A_1=X_2$.
\end{definition}

Denote by $\mathfrak{N}$ the space of natural operators on~$M$. It is an associative algebra. It is also a Lie algebra with the operation $A, B \mapsto \nu^{-1}[A,B]$. Alternatively, $\nu^{-1}\mathfrak{N}$ is a Lie algebra with respect to the usual commutator $A,B \mapsto [A,B]$. 

A star product $\star$ is natural if and only if the operators $L_f$ and $R_f$ are natural for any $f=f_0 + \nu f_1 +\ldots$. The following important theorem was proved by Gutt and Rawnsley in \cite{GR}:
\begin{theorem}\label{T:GR}
If $\star_1$ and $\star_2$ are two equivalent natural star products on a Poisson manifold $M$, then any equivalence operator $T= 1 + \nu T_1 + \ldots$ such that $Tf \star_1 Tg =T(f \star_2 g)$ is oscillatory.
\end{theorem}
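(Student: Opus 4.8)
The goal is to exhibit $T$ as $\exp(\nu^{-1}X)$ with $X\in\mathfrak N_0:=\{X\in\mathfrak N: X_0=X_1=0\}$. The plan is to strip $T$ of one power of $\nu$ at a time: at each stage the leading nontrivial correction of the current equivalence operator will be shown to have exactly the differential order needed to be absorbed into an oscillatory factor, after which division by that factor produces a new equivalence operator, between new natural star products, whose first correction has moved to higher order.

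First I would record three routine facts. (i) The oscillatory operators form a group $G=\exp(\nu^{-1}\mathfrak N_0)$: since $\mathfrak N$ is a Lie algebra under $A,B\mapsto\nu^{-1}[A,B]$ and $\mathfrak N_0$ is closed under this bracket, the Baker--Campbell--Hausdorff series gives $\exp(\nu^{-1}X)\exp(\nu^{-1}Y)=\exp(\nu^{-1}Z)$ with $Z\in\mathfrak N_0$, all series converging $\nu$-adically, and $\exp(\nu^{-1}X)^{-1}=\exp(\nu^{-1}(-X))$. (ii) Conjugation $\mathrm{Ad}_U=U(\cdot)U^{-1}$ by an oscillatory $U=\exp(\nu^{-1}X)$ preserves $\mathfrak N$, because $\mathrm{Ad}_U=\exp(\nu^{-1}\mathrm{ad}_X)$ and $\nu^{-1}[X,\cdot]$ maps $\mathfrak N$ into $\mathfrak N$. (iii) Hence, if $\star_1$ is natural and $U$ is oscillatory, the transported product $f\star_U g:=U^{-1}(Uf\star_1 Ug)$ is again natural: its left multiplication operator is $L^{\star_U}_f=\mathrm{Ad}_{U^{-1}}(L^1_{Uf})$, and the analogous formula holds for $R$, so both are natural by (ii) and the naturality of $\star_1$; thus $\star_U$ is natural by the criterion recalled above.

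The heart of the argument is the following inductive step. Suppose $T=1+\nu^s T_s+O(\nu^{s+1})$, $s\ge1$, is an equivalence operator between natural star products $\star_2$ and $\star_1$; I claim $T_s$ has differential order at most $s+1$. Extracting the $\nu^s$-coefficient of $Tf\star_1 Tg=T(f\star_2 g)$ and using $Tf=f+O(\nu^s)$, all mixed terms drop out, leaving
\[
T_s(fg)-(T_s f)g-f(T_s g)=C^1_s(f,g)-C^2_s(f,g).
\]
By naturality of $\star_1,\star_2$ the right-hand side is bidifferential of order at most $s$ in each argument. Writing $T_s=\sum_{|\beta|\le N}t^\beta\partial_\beta$ with principal symbol $\sigma_N(\xi)=\sum_{|\beta|=N}t^\beta\xi^\beta$, the Leibniz rule shows that the top-total-degree part of the bisymbol of the left-hand side equals $\sigma_N(\xi+\eta)-\sigma_N(\xi)-\sigma_N(\eta)$, homogeneous of degree $N$ in $(\xi,\eta)$ and therefore not cancellable by the lower-order contributions of $T_s$. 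Since the right-hand side has $\xi$-degree at most $s$, every mixed monomial $\xi^\gamma\eta^\delta$ ($\gamma,\delta\neq0$) of this expression with $|\gamma|\ge s+1$ must vanish; its coefficient is $\binom{\gamma+\delta}{\gamma}t^{\gamma+\delta}$, so $t^\beta=0$ whenever $\beta$ admits a splitting $\beta=\gamma+\delta$ with $\gamma,\delta\neq0$ and $|\gamma|\ge s+1$. If $N\ge s+2$ such a splitting exists for every $\beta$ with $|\beta|=N$, forcing $\sigma_N\equiv0$, a contradiction; hence $N\le s+1$.

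With the lemma in hand the theorem follows by iteration. Given the equivalence $T$ with first correction at order $s$, set $U_s=\exp(\nu^s T_s)$, which is oscillatory by the lemma, replace $\star_1$ by the natural product $\star_{U_s}$ of fact (iii), and note that $T'=U_s^{-1}T$ is an equivalence between $\star_2$ and $\star_{U_s}$ whose first correction lies strictly beyond order $s$. Repeating, the oscillatory factors accumulate into a $\nu$-adically convergent product which, by facts (i)--(ii), lies in $G$ and equals $T$; thus $T$ is oscillatory. The main obstacle is the inductive step: comparing total degrees alone only yields $N\le 2s$, and it is the refined bidegree analysis together with the Cauchy-type rigidity $\sigma_N(\xi+\eta)=\sigma_N(\xi)+\sigma_N(\eta)\Rightarrow\sigma_N$ linear (so $\sigma_N\equiv0$ for $N\ge2$) that sharpens the bound to the required $N\le s+1$.
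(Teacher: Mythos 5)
Your proof is correct, and it is essentially the argument of Gutt and Rawnsley: the paper does not prove Theorem \ref{T:GR} itself but quotes it from \cite{GR}, and your route --- extract the first correction $T_s$, use the $\nu^s$-coefficient identity $T_s(fg)-(T_sf)g-f(T_sg)=C^1_s(f,g)-C^2_s(f,g)$ to bound $\mathrm{ord}\,T_s\le s+1$, then strip off the oscillatory factor $\exp(\nu^sT_s)$ and iterate, using that conjugation by oscillatory operators preserves the algebra $\mathfrak{N}$ and hence naturality of the transported product --- is exactly that proof. One simplification: since in standard ordering the bisymbol of the left-hand side is exactly $\sum_\beta t^\beta\bigl((\xi+\eta)^\beta-\xi^\beta-\eta^\beta\bigr)$ with no lower-order corrections, matching the mixed monomials $\xi^\gamma\eta^\delta$ with $|\gamma|\ge s+1$, $|\delta|=1$ kills $t^\beta$ for every $|\beta|\ge s+2$ outright, so the principal-symbol non-cancellation step and the Cauchy-rigidity remark in your last sentence are not actually needed.
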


Any distribution $\Lambda$ on a manifold $M$ supported at a point $\x\in M$ can be represented as $\Lambda=\delta_\x \circ A$ for some differential operator $A$ on~$M$. The order of $\Lambda$ is the smallest $k$ such that $\Lambda(f)=0$ for any function with zero of order $k+1$ at $\x$.

\begin{definition}
A formal distribution $\Lambda=\Lambda_0 + \nu \Lambda_1 + \ldots$ on a manifold $M$ supported at a point $\x\in M$ is called natural if for every $r\geq 0$ the order of the distribution $\Lambda_r$ is not greater than $r$. A formal distribution $\Lambda=\Lambda_0 + \nu \Lambda_1 + \ldots$ on $M$ is called oscillatory if there exists an oscillatory operator $A$ on $M$ such that $\Lambda = \delta_\x \circ A$.
\end{definition}

For any natural operator $X$ on $M$ the formal distribution $\Lambda= \delta_\x \circ X$ is natural. For any natural distribution $\Lambda$ supported at $\x$ there exists a natural operator $X$ such that $\Lambda= \delta_\x \circ X$.

It was proved in \cite{Asympt} that a star product $\star$ on $M$ is natural if and only if the distribution $f \otimes g \mapsto (f \star g)(x)$ on $M \times M$ supported at $(x,x)$ is oscillatory for every $x\in M$.

If $\Lambda=\Lambda_0 + \nu \Lambda_1 + \ldots$ is an oscillatory distribution on $M$ supported at $\x\in M$, then $\Lambda_0=\delta_\x$ and $\Lambda_1$ is a distribution of order at most 2. There exists a unique symmetric bilinear form $\beta_\Lambda$ on $T^\ast_\x M$ such that
\[
\beta_\Lambda(df(\x), dg(\x))=\Lambda_1(fg)
\]
for any functions $f$ and $g$ that vanish at $\x$.

\begin{definition}
An oscillatory distribution $\Lambda$ is called nondegenerate if the form $\beta_\Lambda$ is nondegenerate.
\end{definition}

It was proved in \cite{Asympt} that a formal distribution $\Lambda=\Lambda_0 + \nu \Lambda_1 + \ldots$ on a manifold $M$ supported at a point $\x\in M$ is a FOI if and only if $\Lambda$ is a nondegenerate oscillatory distribution.

If (\ref{E:star}) is a natural star product on a Poisson manifold $M$, then in local coordinates $\{x^i\}$ we have
\begin{equation}\label{E:tensK}
C_1(f,g)= K^{ij} \frac{\p f}{\p x^i}\frac{\p g}{\p x^j},
\end{equation}
where $K^{ij}$ is a tensor such that $K^{ij}-K^{ji}$ is the Poisson tensor on $M$. Given $x\in M$,  the component $\Lambda_1$ of the oscillatory distribution $\Lambda(f \otimes g) = (f \star g)(x)$ on $M \times M$ supported at $(x,x)$ is
\[
\Lambda_1(F)= K^{ij}(x) \frac{\p^2 F}{\p x^i \p y^j}(x,x),
\]
where $F$ is a function on $M \times M$. If $F$ and $G$ are functions on $M \times M$ such that $F(x,x)=0$ and $G(x,x)=0$, we have that
\ba
\beta_{\Lambda}(dF(x,x), dG(x,x)) = K^{ij}(x) \frac{\p^2 FG}{\p x^i \p y^j}(x,x)=\\
K^{ij}(x) \left(\frac{\p F}{\p x^i}(x,x) \frac{\p G}{\p y^j}(x,x) + \frac{\p F}{\p y^j}(x,x) \frac{\p G}{\p x^i}(x,x)\right).
\ea
The form $\beta_\Lambda$ is therefore given by the symmetric anti-diagonal block matrix
\[
\begin{bmatrix}
0 & K(x)\\
K^t(x) & 0
\end{bmatrix}
\]
which is nondegenerate if and only if the tensor $K^{ij}(x)$ is nondegenerate. We have thus proved the following statement.
\begin{proposition}
Given a natural star product $\star$ on a manifold $M$ and a point $x\in M$, the distribution 
\[
\Lambda(f \otimes g) = (f \star g)(x)
\]
on $M \times M$ supported at $(x,x)$ is a formal oscillatory integral if and only if the tensor $K^{ij}(x)$ in (\ref{E:tensK}) is nondegenerate.
\end{proposition}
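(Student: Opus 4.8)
The plan is to assemble the two characterizations recalled just above from \cite{Asympt} together with the explicit first-order computation. First I would observe that, because $\star$ is natural, the quoted result of \cite{Asympt} guarantees that the distribution $\Lambda(f \otimes g) = (f \star g)(x)$ on $M \times M$, supported at $(x,x)$, is already \emph{oscillatory}. Consequently $\Lambda_0 = \delta_{(x,x)}$, the component $\Lambda_1$ is a distribution of order at most $2$, and the symmetric bilinear form $\beta_\Lambda$ on $T^\ast_{(x,x)}(M \times M)$ is well-defined.

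Next I would invoke the second quoted result of \cite{Asympt}: a formal distribution supported at a point is a FOI if and only if it is a nondegenerate oscillatory distribution. Since $\Lambda$ is known to be oscillatory, this immediately reduces the whole claim to the single assertion that $\beta_\Lambda$ is nondegenerate precisely when the tensor $K^{ij}(x)$ is nondegenerate. In this way the ``if and only if'' of the proposition is transferred to a ``if and only if'' between $\beta_\Lambda$ and $K$.

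To verify that equivalence I would first pin down $\Lambda_1$ as a distribution on $M \times M$. On a product function $f \otimes g$ we have, by definition of $\star$ and by (\ref{E:tensK}), that $\Lambda_1(f \otimes g) = C_1(f,g)(x) = K^{ij}(x)\, \p_i f(x)\, \p_j g(x)$. Since $\Lambda$ is supported at the point $(x,x)$ it is determined by jets, and the jets at $(x,x)$ of product functions span the whole jet space $\J_{(x,x)}$ (monomials $x^\alpha y^\beta$ are themselves products); hence this formula extends uniquely to the order-$2$ distribution $\Lambda_1(F) = K^{ij}(x)\, \frac{\p^2 F}{\p x^i \p y^j}(x,x)$ for arbitrary $F$ on $M \times M$. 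Feeding this into the defining relation $\beta_\Lambda(dF(x,x), dG(x,x)) = \Lambda_1(FG)$ for $F,G$ vanishing at $(x,x)$ and applying the Leibniz rule to the mixed second derivative produces exactly the anti-diagonal block matrix displayed above, whose determinant equals $(-1)^n(\det K(x))^2$; thus $\beta_\Lambda$ is nondegenerate if and only if $\det K(x) \neq 0$, which completes the argument.

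I expect the only genuinely delicate point to be the well-definedness of the extension of $\Lambda_1$ from product functions to arbitrary $F$, together with the claim that no further contribution at order $\nu$ can appear. Both rest on the fact that $\Lambda$ is supported at a single point (so it is given by a differential operator and is determined by its action on jets) and on the spanning property of tensor-product jets in $\J_{(x,x)}$; everything after that is the routine bilinear-form computation already sketched in the preceding paragraphs.
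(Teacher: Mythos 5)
Your proposal is correct and takes essentially the same route as the paper: both reduce the claim, via the two results quoted from \cite{Asympt} (naturality makes the distribution oscillatory, and FOI is equivalent to nondegenerate oscillatory), to the nondegeneracy of $\beta_\Lambda$, and both then compute $\Lambda_1(F)=K^{ij}(x)\frac{\p^2 F}{\p x^i \p y^j}(x,x)$ and identify $\beta_\Lambda$ with the anti-diagonal block matrix built from $K(x)$. Your additional care in extending $\Lambda_1$ from factorizable functions to arbitrary jets, and the determinant formula $(-1)^n(\det K(x))^2$, are harmless elaborations of steps the paper treats as immediate.
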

In particular, if $M$ is symplectic and $C_1(f,g) = \frac{1}{2}\{f,g\}$ is skew symmetric, then  $K^{ij}$ is a nondegenerate Poisson tensor. It follows that Fedosov's star products are given by formal oscillatory integrals.

We plan to study the oscillatory kernels of such FOIs and use them for a geometric classification of star products in our future work.

In this paper we will consider the opposite case of natural star products on a symplectic manifold $M$ for which the rank of the tensor $K^{ij}$ is $\frac{1}{2}\dim M$.

\section{Star products with separation of variables}\label{S:sep}

Berezin described in \cite{Ber1} and \cite{Ber2} a quantization procedure on K\"ahler manifolds which leads to star products with the property of separation of variables (see, e.g., \cite{BW}, \cite{CGR}, \cite{E}, \cite{CMP1}, \cite{KS}). It is natural to consider the star products with this property on pseudo-K\"ahler manifolds. Recall that a pseudo-K\"ahler manifold is a complex manifold equipped with a real symplectic form of type $(1,1)$ with respect to the complex structure.

\begin{definition}\label{D:sep}
A star product (\ref{E:star}) on a pseudo-K\"ahler manifold $M$ is called a star product with separation of variables if the operators $C_r, r \geq 1,$ differentiate the first argument in holomorphic directions and the second argument in antiholomorphic ones. 
\end{definition}

The simplest example of such star product is the anti-Wick star product on $\C^n$,

\[
f \star g = \sum\displaylimits_{r=0}^\infty \frac{\nu^r}{r!} \sum_{k_1, \ldots,k_r} \frac{\p^r f}{\p z^{k_1} \ldots \p z^{k_r}} \frac{\p^r g}{\p \bar z^{k_1} \ldots \p \bar z^{k_r}}.
\]

As shown by Astashkevich in \cite{Ast}, the concept of a star product with separation of variables can be generalized to symplectic manifolds equipped with two transversal complex Lagrangian polarizations. In Definition \ref{D:sep} these are the holomorphic and the anti-holomorphic polarizations. It is natural to call the star products in Definition \ref{D:sep} the star products of the anti-Wick type. If the roles of holomorphic and antiholomorphic derivatives in Definition \ref{D:sep} are switched, the corresponding star products are called the star products of the Wick type, as introduced in \cite{BW}.

Let $\star$ be a product of the anti-Wick type on $M$. If $a$ is a holomorphic function and $b$ is an antiholomorphic function locally defined on $M$, then for any function $f$ we have
\[
     a \star f = af \mbox{ and } f \star b = bf,
\]
i.e., $L_a = a$ and $R_b = b$ are pointwise multiplication operators.

We will denote the pointwise multiplication operator by a function $f$ by the same symbol $f$ throughout this paper.
 
 Let $(M,\omega_{-1})$ be a pseudo-K\"ahler manifold. In \cite{CMP1} it was shown that the star products of the anti-Wick type on $M$ are bijectively parametrized (not only up to equivalence) by the formal closed (1,1)-forms
\begin{equation}\label{E:omega}
    \omega = \nu^{-1}\omega_{-1} + \omega_0 + \nu \omega_1 + \ldots
\end{equation}
on $M$. We will briefly recall this parametrization. A closed $(1,1)$-form $\alpha$ on a Stein neighborhood has a potential $\varphi$ such that $\alpha=i\p \bar \p \varphi$.

For any form (\ref{E:omega}) on $(M, \omega_{-1})$ there exists a unique star product of the anti-Wick type $\star$ on $M$ such that on every Stein coordinate chart $U \subset M$ and for any $\nu$-formal potential $\Phi$ of $\omega$ on $U$ the following operators are explicitly given:
\[
    L_{\frac{\p\Phi}{\p z^k}} = \frac{\p\Phi}{\p z^k} + \frac{\p}{\p z^k} \mbox{ and } R_{\frac{\p\Phi}{\p \bar z^l}} = \frac{\p\Phi}{\p \bar z^l} + \frac{\p}{\p \bar z^l}.
\]
The formal form $\omega$ is called {\it the classifying form of the star product~$\star$.} Every star product of the anti-Wick type has a unique classifying form.

Given a star product $\star$ of the anti-Wick type on $M$, there exists a $\nu$-formal differential operator
\[
    B = 1 + \nu B_1 + \nu^2 B_2 + \ldots
\]
globally defined on $M$ and such that for any local holomorphic function $a$ and local antiholomorphic function $b$,
\[
     B(ab) = b \star a.
\]
It is called the formal Berezin transform of the star product $\star$. Observe that $Ba=a$ and $Bb = b$. It is proved in \cite{CMP3} that
\begin{equation}\label{E:iaib}
    L_b = B\circ b \circ B^{-1} \mbox{ and } R_a=B \circ a \circ B^{-1}.
\end{equation}
One can recover the product $\star$ from the operator $B$ using that
\[
    (ab) \star (a^\prime  b^\prime ) = a B(a^\prime b) b^\prime ,
\]
where the functions $a,a'$ are local holomorphic and $b,b^\prime $ are local antiholomorphic. The equivalent star product
\begin{equation}\label{E:wicktype}
     f \star^\prime  g := B^{-1} (Bf \star Bg)
\end{equation} 
on $M$ is a star product with separation of variables {\it of the Wick type}.

\begin{lemma}\label{L:bertr}
  The formal Berezin transform $B$ of a star product of the anti-Wick type $\star$ is oscillatory.
\end{lemma}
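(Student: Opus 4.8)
The plan is to obtain the statement as a direct consequence of the Gutt--Rawnsley theorem (Theorem \ref{T:GR}). The first step is to read off from (\ref{E:wicktype}) that $B(f \star' g) = Bf \star Bg$; since $B = 1 + \nu B_1 + \nu^2 B_2 + \ldots$, this is precisely the statement that $B$ is an equivalence operator between the two star products $\star$ and $\star'$, in the sense of Section \ref{S:star} (with $T=B$, $\star_1=\star$, $\star_2=\star'$). Consequently, as soon as both $\star$ and $\star'$ are known to be natural, Theorem \ref{T:GR} applies verbatim and forces $B$ to be oscillatory, which is the assertion. No part of the conclusion then requires any explicit computation with the operators $B_r$.

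What the argument really needs, therefore, is that a star product with separation of variables is natural, i.e. that each $C_r$ in (\ref{E:star}) has order at most $r$ in both arguments, or equivalently that every operator $L_f$ and $R_g$ is natural. I would prove this for $\star$ locally, on a Stein chart carrying a formal potential $\Phi = \nu^{-1}\Phi_{-1} + \Phi_0 + \ldots$ of the classifying form. Separation of variables gives $L_a = a$ for holomorphic $a$, so each $R_g$ commutes with multiplication by holomorphic functions and hence differentiates only in antiholomorphic directions; $R_g$ is then pinned down by $R_g\,1 = g$ together with the commutation relations $[R_g, L_{\p\Phi/\p z^k}] = 0$, where $L_{\p\Phi/\p z^k} = \p\Phi/\p z^k + \p/\p z^k$. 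The leading term of this left operator is $\nu^{-1}$ times multiplication by $\p\Phi_{-1}/\p z^k$, and since $\omega_{-1} = i\p\bar\p\Phi_{-1}$ is nondegenerate the mixed Hessian $\p^2\Phi_{-1}/\p z^k \p\bar z^l$ is invertible, so the antiholomorphic gradients of the functions $\p\Phi_{-1}/\p z^k$ span the antiholomorphic cotangent space at each point. Expanding the commutation relation in powers of $\nu$ and inducting on the order, one shows that the coefficient $(R_g)_r$ is an operator in antiholomorphic derivatives of order at most $r$: at order $\nu^{-1}$ the relation forces $(R_g)_0$ to commute with multiplication by every $\p\Phi_{-1}/\p z^k$, hence to be multiplication by $g_0$, and at each higher order the invertibility of the Hessian lets one solve for the top-order coefficients of $(R_g)_r$ in terms of strictly lower-order, already-controlled data. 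The identical argument bounds the $L_f$, and, applied with the roles of the holomorphic and antiholomorphic directions interchanged, shows that the Wick-type product $\star'$ is natural as well.

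The main obstacle is exactly this order-tracking induction: one must check that solving the commutation relations recursively never increases the differential order faster than the accompanying power of $\nu$, and it is precisely the nondegeneracy of $\omega_{-1}$ (through the invertibility of the Hessian of $\Phi_{-1}$) that keeps the bookkeeping under control. Everything else is formal. As a sanity check, in the flat model on $\C^n$ the formal Berezin transform is the Gaussian $\exp(\nu\Delta)$ with $\Delta = \sum_k \p^2/\p z^k \p\bar z^k$, which is manifestly of the form $\exp(\nu^{-1}X)$ with $X = \nu^2\Delta$ satisfying $X_0 = X_1 = 0$ --- exactly the oscillatory form that Theorem \ref{T:GR} delivers in general.
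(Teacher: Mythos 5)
Your proposal is correct and follows essentially the same route as the paper: the paper's proof consists precisely of observing that $B$ is an equivalence operator between $\star$ and $\star'$, citing Neumaier \cite{N} for the fact that both products are natural, and invoking Theorem \ref{T:GR}. The only difference is that where the paper cites \cite{N}, you supply your own inductive order-tracking argument for naturality (determining $R_g$ from $R_g 1=g$ and $[R_g, L_{\p\Phi/\p z^k}]=0$, with the nondegeneracy of the mixed Hessian of $\Phi_{-1}$ forcing the order bound at each step); this sketch is sound and amounts to an inlined proof of the cited result rather than a genuinely different approach to the lemma.
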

\begin{proof}
The star products  $\star$ and $\star^\prime$ are natural (see  \cite{N}). Since $B$ is an equivalence operator between the products $\star$ and $\star'$, it is oscillatory according to Theorem \ref{T:GR}.
\end{proof}

It was shown in \cite{KS} and \cite{LMP10} that for any point $\x \in M$ and any integer $l \geq 1$ the functional
\[
    K^{(l)}(f_1, \ldots, f_l) = B(f_1 \star' \ldots \star' f_l)(\x) = (Bf_1 \star \ldots \star Bf_l)(\x)
\]
on $M^l$ is a FOI at $(\x)^l=(\x,\ldots,\x)  \in M^l$. Below we give a phase-density pair associated with $K^{(l)}$, which was found in \cite{KS} and \cite{LMP10}. 

If $N$ is an embedded (regular) submanifold of a manifold $M$ and $\I_N = \{f \in C^\infty(M)| \ f |_N=0\}$ is the vanishing ideal of $N$ in the algebra $C^\infty(M)$, we denote by $C^\infty(M,N)$ the space of jets along $N$,
\[
C^\infty(M,N) = C^\infty(M) / \bigcap_{r=1}^\infty \I^r_N.
\]
We think of $C^\infty(M,N)$ as of the algebra of functions on the formal neighborhood $(M,N)$ of $N$ in $M$.

Let $U$ be a Stein neighborhood in $M$ and $\Phi$ be a potential of the classifying form $\omega$ of the product $\star$ on $U$. Let $\widebar U$ denote a copy of $U$ equipped with the opposite complex structure. One can find a function 
$\tilde\Phi(x,y)$ on $U \times \widebar U$ such that $\tilde\Phi (x,x)= \Phi(x)$ and
\[
     \bar\p_{U \times \widebar U} \tilde{\Phi}
\]
has zero of infinite order at every point of the diagonal of $U \times \widebar U$. The function $\tilde\Phi(x,y)$ is called an almost analytic extension of  $\Phi$  (see details in \cite{LMP10}). For each $l \geq 1$ we introduce a function $G^{(l)}$ on $U^l$ by the formula
\begin{eqnarray*}
     G^{(l)}(x_1, \ldots x_l) := \tilde\Phi(x_1,x_2) +  \tilde\Phi(x_2,x_3) + \ldots  + \tilde \Phi(x_l,x_1)   \\
     - (\Phi(x_1) + \Phi(x_2) + \ldots + \Phi(x_l)).
\end{eqnarray*}
This function defines an element of $\nu^{-1}C^\infty(U^l,U)[[\nu]]$, where $U$ is identified with the diagonal of $U^l$. This element does not depend on the choice of the potential $\Phi$ and of the almost analytic extension of $\Phi$. Thus, taking such functions for every Stein neighborhood in $M$, we get a global element of $\nu^{-1}C^\infty(M^l,M)[[\nu]]$. We call it the {\it cyclic formal $l$-point Calabi function of the classifying form $\omega$.}

Now suppose that a point $\x\in U$ is fixed and consider the function
\begin{equation}\label{E:fgl}
F^{(l)}(x_1,\ldots, x_l) := G^{(l+1)}(\x, x_1, \ldots, x_l)
\end{equation}
on $U^l$. The jet of $F^{(l)}$ at $(\x)^l =(\x,\ldots,\x)\in U^l$ is determined by the jet of $G^{(l+1)}$ at $(\x)^{l+1}\in U^{l+1}$. It was shown in \cite{KS} and \cite{LMP10} that the FOI $K^{(l)}$ at $(\x)^l$ is associated with the pair
\[
(F^{(l)}, \mu^{\otimes l})
\]
on $U^l$, where $\mu$ is a trace density of the star product~$\star$.

 The main goal of this paper is to develop an algebraic framework which will allow to express the jet of the formal oscillatory exponent $\exp G^{(l)}$ at $(\x)^l \in M^l$ for every $l \geq 1$ in terms of the star product~$\star$. We plan to apply this framework to the Berezin and Berezin-Toeplitz star products on general symplectic manifolds because we expect that there should exist an analog of the formal Calabi function $G^{(l)}$ for these star products, which may shed some light on their inner structure.

 \section{The algebra $\mathbb{B}$}\label{S:algb}

Given a manifold $M$, we identify the diagonal of $M^l$ with $M$ (thus assuming that $M \subset M^l$ for any $l$). An $l$-differential operator $C(f_1, \ldots, f_l)$ on $M$ defines a mapping
\[
C: C^\infty(M^l,M) \to C^\infty(M).
\]

Let  $ \mathbb{A}:=(C^\infty(M)[[\nu]],\star)$ be a star algebra on a Poisson manifold~$M$ with natural star product $\star$. 
Denote by $\widetilde M$ a copy of $M$ with the opposite Poisson structure. The opposite product $f \star^{\mathrm{opp}} g := g \star f$ is a star product on $\widetilde M$. The product
\[
     \odot := \star \otimes \star^{\mathrm{opp}}
\]
is a natural star product on $M \times \widetilde{M}$. For $f,g,u,v \in \mathbb{A}$ we have
\[
    (f \otimes g) \odot (u \otimes v) = (f \star u) \otimes (v \star g).
\]
Here $\otimes$ is the tensor product over the ring $\C[[\nu]]$. The product $\odot$ induces a product on $C^\infty(M \times M,M)[[\nu]]$ which will be denoted by the same symbol. We introduce an algebra
\[
\mathbb{B}:= (C^\infty(M \times M, M)[[\nu]], \odot).
\]
We call an element of $\mathbb{B}$ factorizable if it is induced by a formal function $f \otimes g \in \mathbb{A} \otimes \mathbb{A}$, and use the same notation $f \otimes g$ for this element. Recall that $\mathfrak{N}$ denotes the algebra of natural operators on $M$.
There exists a homomorphism $F \mapsto N_F$ from $\mathbb{B}$ to $\mathfrak{N}$ given on the factorizable elements by
\[
            N_{f \otimes g} = L_fR_g.
\]
We will prove that if $M$ is symplectic, then this mapping is an isomorphism. To this end, we need to recall several definitions and facts from~\cite{LMP5}. 

If $A$ is a differential operator of order $r$ on a manifold $M$, then its principal symbol $\mathrm{Symb}_r(A)$ is a fiberwise polynomial function  of degree~$r$ on the cotangent bundle $T^\ast M$. The mapping $A \mapsto \mathrm{Symb}_r(A)$ extends by zero to the differential operators of order less than $r$. Given a natural operator $N = N_0 + \nu N_1 + \ldots$ on $M$, we call the formal series
\[
   \sigma(N) := \sum_{r=0}^\infty \mathrm{Symb}_r(N_r)
\]
the sigma symbol of $N$. It can be interpreted as a function on the formal neighborhood of the zero section $Z$ of $T^\ast M$,
\[
     \sigma(N) \in C^\infty(T^\ast M,Z).
\]
The mapping $N \mapsto \sigma(N)$ is a surjective homomorphism from $\mathfrak{N}$ onto $C^\infty(T^\ast M,Z)$ whose kernel is $\nu\mathfrak{N}$. It follows that the sigma symbol $\sigma(N_F)$ of $F = F_0 + \nu F_1 + \ldots \in C^\infty(M\times M, M)[[\nu]]$ depends only on~$F_0$.
It was proved in \cite{LMP5} that if $M$ is symplectic, then the mapping
\[
      C^\infty(M \times M, M) \ni F_0   \mapsto \sigma(N_{F_0}) 
\]
is an isomorphism of $C^\infty(M \times M, M)$ onto $C^\infty(T^\ast M,Z)$. 
\begin{theorem}\label{T:isom}
If $\star$ is a natural star product on a symplectic manifold~$M$, then the mapping $F \mapsto N_F$ is an isomorphism of the algebra $\mathbb{B}$ onto $\mathfrak{N}$. 
\end{theorem}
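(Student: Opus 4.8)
The plan is to deduce the theorem entirely from the two facts recalled just above the statement: that $\sigma\colon \mathfrak{N}\to C^\infty(T^\ast M,Z)$ is a homomorphism with kernel $\nu\mathfrak{N}$, and that for symplectic $M$ the map $C^\infty(M\times M,M)\ni F_0\mapsto \sigma(N_{F_0})$ is an isomorphism onto $C^\infty(T^\ast M,Z)$. Since the homomorphism property of $F\mapsto N_F$ is already granted, only bijectivity remains, and I would obtain it by a $\nu$-adic filtration argument whose associated graded map is precisely the isomorphism of \cite{LMP5}. Before starting I would record the two structural facts that make everything run: the map $F\mapsto N_F$ is $\C[[\nu]]$-linear (because $L_{\nu f}=\nu L_f$ and $R_{\nu g}=\nu R_g$) and $\nu$-adically continuous, and both $\mathbb{B}$ and $\mathfrak{N}$ are complete and Hausdorff for the $\nu$-adic topology.

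For injectivity I would argue by leading term. Suppose $N_F=0$ for some $F\neq 0$, and let $r\geq 0$ be minimal with $F_r\neq 0$. By $\C[[\nu]]$-linearity, $N_F=\nu^r N_G$, where $G=F_r+\nu F_{r+1}+\ldots$ has $G_0=F_r\neq 0$. Since $\sigma(N_G)$ depends only on $G_0$, the injectivity of the graded map gives $\sigma(N_G)=\sigma(N_{F_r})\neq 0$, whence $N_G\neq 0$ and therefore $N_F=\nu^r N_G\neq 0$, a contradiction. For surjectivity I would build a preimage of a given $N\in\mathfrak{N}$ by successive approximation along the filtration. Using the graded isomorphism, choose $F_0\in C^\infty(M\times M,M)$ with $\sigma(N_{F_0})=\sigma(N)$; then $N-N_{F_0}\in\ker\sigma=\nu\mathfrak{N}$, say $N-N_{F_0}=\nu N^{(1)}$. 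Repeating with $N^{(1)}$ produces $F_1$ with $N^{(1)}-N_{F_1}=\nu N^{(2)}$, and so on, giving
\[
N=\sum_{k\geq 0}\nu^k N_{F_k}=N_{F},\qquad F=\sum_{k\geq 0}\nu^k F_k\in\mathbb{B},
\]
the sum converging $\nu$-adically because $\nu^k N_{F_k}$ has $\nu$-order at least $k$, and the last equality holding by $\C[[\nu]]$-linearity and continuity.

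The genuine content of the theorem sits in the associated-graded isomorphism $F_0\mapsto\sigma(N_{F_0})$ borrowed from \cite{LMP5}, and this is where symplecticity is used; granting it, the remaining work is bookkeeping. The one point demanding care, and the place I would expect the argument to feel least automatic, is the comparison of the two filtrations: the grading on $\mathfrak{N}$ cut out by $\sigma$ intertwines the $\nu$-degree with the differential order (so that $\ker\sigma=\nu\mathfrak{N}$ rather than a naive order filtration), and one must check that the homomorphism $F\mapsto N_F$ is strictly compatible with these filtrations so that an isomorphism on the graded level lifts to the filtered level. Completeness of both sides is what guarantees that the successive approximation in the surjectivity step converges to an honest element of $\mathbb{B}$, closing the argument.
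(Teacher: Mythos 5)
Your proposal is correct and follows essentially the same route as the paper: the paper also reduces everything to the isomorphism $F_0 \mapsto \sigma(N_{F_0})$ from \cite{LMP5} together with $\ker\sigma = \nu\mathfrak{N}$, and constructs the inverse of $F \mapsto N_F$ by exactly your successive-approximation scheme ($\sigma(N_{F_0})=\sigma(N)$, then $\nu^{-1}(N-N_{F_0})\in\mathfrak{N}$, and so on), with uniqueness at each step giving injectivity. Your explicit separation into a leading-term injectivity argument and a $\nu$-adic convergence check merely makes the bookkeeping the paper leaves implicit visible; there is no substantive difference.
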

\begin{proof}
We will construct the inverse mapping of the mapping $F \mapsto N_F$. Let $N$ be an arbitrary natural operator on $M$. There exists a unique element $F_0 \in C^\infty(M \times M, M)$ such that
\[
      \sigma(N_{F_0}) = \sigma(N).     
\]
Then $\nu^{-1}(N - N_{F_0}) \in \mathfrak{N}$. Let $F_1$ denote the unique element of $C^\infty(M \times M, M)$ such that
\[
       \sigma(N_{F_1}) = \sigma(\nu^{-1}(N - N_{F_0})).
\]
Hence, $\nu^{-2}(N - N_{F_0} - \nu N_{F_1}) \in \mathfrak{N}$. Continuing this process, we pro\-duce a unique element $F = F_0 + \nu F_1 + \ldots \in\mathbb{B}$ such that~$N = N_F$.
\end{proof}

We fix a point $\x\in M$ and denote by~$\Ncal$ the set of all natural distributions supported at $\x$. 

Denote by $\tau$ the involution on $\mathbb{B}$ such that $\tau(f\otimes g) = g\otimes f$. It is an antiautomorphism of 
$\mathbb{B}$. The algebra $\mathbb{B}$ acts on $\Ncal$ so that an element $F\in \mathbb{B}$ maps $\Lambda\in\Ncal$ to $\Lambda \circ N_{\tau(F)}\in \Ncal$. Given $F \in \mathbb{B}$ and $\x \in M$, we set
\begin{equation}\label{E:LF}
   \Lambda_F := \delta_\x \circ N_{\tau(F)}.
\end{equation}
For a factorizable $F=f \otimes g$ we have
\begin{eqnarray*}
\Lambda_{f \otimes g} (h)= (N_{\tau(f \otimes g)}h)(\x) =
(N_{g \otimes f}h)(\x) =\\
 (L_gR_fh)(\x) =(g \star h \star f)(\x).
\end{eqnarray*}
In the next section we construct one more auxiliary algebra based on the algebra $\mathbb{B}$.

\section{The algebra $\calC$}\label{S:algc}

Let $\star$ be a star product of the anti-Wick type on a pseudo-K\"ahler manifold $M$, $B$ be its formal Berezin transform,  and $\x$ be a fixed point in $M$.  We choose a coordinate chart $U$ containing $\x$ with coordinates $\{z^k,\bar z^l\}$ such that $z^k(\x)=\bar z^l(\x)=0$ for all $k,l$. We consider various jet spaces on $M$ at the point $\x$ and on $M \times M$ at the point $(\x,\x)$. These spaces will be identified with spaces of formal series in local coordinates. Denote by $\F=\C[[\nu,z,\bar z]]$ the space of $\nu$-formal jets on $M$ at $\x$ and by $\A=(\F,\star)$ the algebra on $\F$ with the product induced by $\star$. Denote by $\F^{(2)} = \C[[\nu, z, \bar z, w, \bar w]]$ the space of $\nu$-formal jets on $M \times M$ at $(\x,\x)$, where $\{z^k,\bar z^l\}$ and $\{w^k,\bar w^l\}$ are the coordinates on the first and the second factors of the chart $U \times U$, respectively. For the involutive mapping $\tau: \F^{(2)} \to \F^{(2)}$ such that $\tau(f \otimes g) = g \otimes f$ for $f,g \in \F$, one has $\tau(z^k)=w^k$ and $\tau(\bar z^l)=\bar w^l$. Since the natural distribution (\ref{E:LF}) supported at $\x$ depends only on the jet of $F$ at $(x,x)$, the mapping $F \to \Lambda_F$ induces a mapping $\lambda: \F^{(2)} \to \Ncal$ such that
\begin{equation}\label{E:factx}
\lambda(f \otimes g)= (g \star h \star f)(\x).
\end{equation}
The space $\J =\C[[z,\bar z]]$ of jets on $M$ at $\x$ has a descending filtration $\J=F_0\J \supset F_1\J \supset \ldots$,
where $F_r\J$ is the space of jets which have zero of order at least $r$ at~$\x$. We assume that $F_r\J=\J$ for $r <0$. We introduce a filtration
\[
\F=F_0\F \supset F_1\F \supset \ldots
\]
on the space of formal jets $\F=\J[[\nu]]$ which agrees with the filtration on $\J$ and for which the filtration degree of $\nu$ is 2, so that
\[
    F_r\F = F_r \J + \nu F_{r-2} \J  + \nu^2 F_{r-4} \J+ \ldots.
\]
We call it {\it the standard filtration}. Observe that $\F/F_r \F$ is a finite dimensional vector space over $\C$. One can check that
\[
      \F^{(2)} = \varprojlim_r (\F \otimes \F)/F_r(\F \otimes \F),
\]
where the subspaces
\[
     F_r(\F \otimes \F) := \sum_{i+j=r} F_i\F \otimes F_j\F
\]
form the standard filtration on $\F \otimes \F$. Here $\otimes$  is the tensor product over the ring $\C[[\nu]]$. 
\begin{lemma}\label{L:filtered}
  The algebra $\A=(\F,\star)$ is a filtered algebra with respect to the standard filtration.
\end{lemma}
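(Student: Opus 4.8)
The plan is to verify directly the defining property of a filtered algebra, namely that $F_i\F \star F_j\F \subseteq F_{i+j}\F$ for all $i,j$.

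First I would reduce to a statement about ordinary jets, eliminating the formal parameter. By the definition of the standard filtration, $F_i\F = \sum_{p \geq 0} \nu^p F_{i-2p}\J$, and $\star$ is $\C[[\nu]]$-bilinear. Hence it suffices to show that for $f_0 \in F_a\J$ and $g_0 \in F_b\J$ one has $f_0 \star g_0 \in F_{a+b}\F$. Indeed, a typical generator of $F_i\F$ is $\nu^p f_0$ with $f_0 \in F_{i-2p}\J$, and similarly $\nu^q g_0$ generates $F_j\F$; then $\nu^p f_0 \star \nu^q g_0 = \nu^{p+q}(f_0 \star g_0)$, and if $f_0 \star g_0 \in F_{(i-2p)+(j-2q)}\F$, multiplication by $\nu^{p+q}$ raises the filtration degree by $2(p+q)$, landing the product in $F_{i+j}\F$.

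Next I would expand $f_0 \star g_0 = f_0 g_0 + \sum_{r \geq 1} \nu^r C_r(f_0,g_0)$ and bound each term separately. The leading term $f_0 g_0$ is the pointwise product of a jet vanishing to order $a$ and one vanishing to order $b$, so it vanishes to order $a+b$ and lies in $F_{a+b}\J \subseteq F_{a+b}\F$. For the term $\nu^r C_r(f_0,g_0)$ I would invoke the naturalness of $\star$: the bidifferential operator $C_r$ has order at most $r$ in each argument. Writing $C_r$ in the local coordinates $\{z^k,\bar z^l\}$ as a sum of monomial terms $a_{\alpha\beta}\,\partial^\alpha f_0\,\partial^\beta g_0$ with $|\alpha|,|\beta| \leq r$, the factor $\partial^\alpha f_0$ vanishes to order at least $a - |\alpha| \geq a - r$ and $\partial^\beta g_0$ to order at least $b - r$; multiplication by the smooth coefficient $a_{\alpha\beta}$ cannot lower the order of vanishing. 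Hence $C_r(f_0,g_0) \in F_{a+b-2r}\J$, and multiplying by $\nu^r$ raises the filtration degree by $2r$, giving $\nu^r C_r(f_0,g_0) \in F_{(a+b-2r)+2r}\F = F_{a+b}\F$.

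Summing over $r$ yields $f_0 \star g_0 \in F_{a+b}\F$, which completes the reduction and hence the proof. The only substantive ingredient is the naturalness of $\star$: the bound on the order of $C_r$ is exactly what matches the choice of filtration degree $2$ for $\nu$, because one application of $C_r$ lowers the order of vanishing by at most $2r$ while the accompanying factor $\nu^r$ raises the filtration degree by $2r$, so the two effects cancel. I do not expect a genuine obstacle; the one point deserving care is the bookkeeping under the convention $F_s\J = \J$ for $s < 0$, which keeps the order estimates valid even when $a-r$ or $a+b-2r$ becomes negative.
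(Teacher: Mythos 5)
Your proof is correct and follows essentially the same route as the paper's: the paper likewise observes that naturalness gives $C_r(f,g) \in F_{i+j-2r}\J$ for $f \in F_i\J$, $g \in F_j\J$, so that $\nu^r C_r(f,g) \in F_{i+j}\F$. Your explicit reduction to pure jets via $\C[[\nu]]$-bilinearity and the coordinate-level order estimate are just careful expansions of the steps the paper leaves implicit.
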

\begin{proof}
Since $\star$ is a natural star product, the bidifferential operator $C_r$ in (\ref{E:star}) is of order not greater than $r$ in each argument. Therefore, if $f \in F_i\J$ and $g \in F_j\J$, then $C_r(f,g) \in F_{i+j - 2r}\J$ and $\nu^r C_r(f,g)\in F_{i+j}\F$, 
whence the lemma follows.
\end{proof}
Lemma \ref{L:filtered} allows to extend various mappings of the space $\F \otimes \F$ related to the product $\star$ to its completion $\F^{(2)} = \F \hat \otimes \F$ with respect to the topology associated with the standard filtration. We will tacitly assume that these extensions can be justified with the use of this lemma.

We define a filtered associative algebra $\calC:=(\F^{(2)}, \ast)$ with the product $\ast$ given on the factorizable elements by the formula
\begin{eqnarray*}
    (g_1 \otimes h_1) \ast (g_2 \otimes h_2):=(h_1 \star g_2)(\x) \cdot (g_1 \otimes h_2).
\end{eqnarray*}

We introduce a trace on $\calC$
given on the factorizable elements by the formula
\begin{equation}\label{E:trace}
    \tr (f \otimes g) := (g \star f) (\x).
\end{equation}
One can check the trace property on factorizable elements,
\begin{eqnarray*}
  \tr((g_1 \otimes h_1) \ast (g_2 \otimes h_2)) =(h_1 \star g_2)(\x) \cdot (h_2 \otimes g_1)(\x) =\\
   \tr((g_2 \otimes h_2) \ast (g_1 \otimes h_1)).
\end{eqnarray*}

Given a factorizable element $f \otimes g \in \F^{(2)}$, we get from formula~(\ref{E:factx})  that
\[
  \tr  (f \otimes g)  = (g \star f)(\x) = \langle\lambda (f \otimes g), 1\rangle.
\]
It follows that for $F\in \calC$,
\begin{equation}\label{E:lambdatrace}
  \tr  F  =  \langle\lambda (F), 1\rangle.
\end{equation}

We introduce a splitting of $\calC$,
\begin{equation}\label{E:split}
     \calC = \G \oplus \H,
\end{equation}
where $\G= \C[[\nu, z, \bar w]]$ and $\H$ is generated by $\bar z^l$ and $w^k$ for all $k,l$, i.e., any $H\in \H$ can be represented as
\[
H= \bar z^l A_l + w^k B_k 
\]
for some $A_l,B_k \in \calC$. This splitting does not depend on the choice of local holomorphic coordinates used in its definition.
We will show that in the splitting (\ref{E:split}) the subspace $\G$ is a subalgebra of $\calC$ and $\H$ is a two-sided ideal of $\calC$.
\begin{lemma}\label{L:lsurj}
  The subspace $\H\subset \calC$ is a two-sided ideal of the algebra~$\calC$ which lies in the kernel of the mapping $\lambda$.
  \end{lemma}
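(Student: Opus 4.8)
The plan is to reduce both assertions to factorizable elements and then to exploit the defining feature of a star product of the anti-Wick type, namely that $L_a=a$ is pointwise multiplication for a holomorphic $a$ and $R_b=b$ is pointwise multiplication for an antiholomorphic $b$. In particular $z^k\star\phi=z^k\phi$ and $\phi\star\bar z^l=\bar z^l\phi$ for every $\phi$, so a nonconstant holomorphic factor can be extracted from a $\star$-product on the left and a nonconstant antiholomorphic factor on the right; since $z^k(\x)=\bar z^l(\x)=0$, any product from which such a factor has been extracted vanishes at $\x$. Because $\ast$, $\lambda$ and the splitting $\calC=\G\oplus\H$ are continuous for the standard filtration (Lemma \ref{L:filtered}), it suffices to check everything on monomials $z^\alpha\bar z^\beta w^\gamma\bar w^\delta$, i.e.\ on factorizable elements $p\otimes q$ with $p=z^\alpha\bar z^\beta$ and $q=z^\gamma\bar z^\delta$ (the second factor being the pullback of $w^\gamma\bar w^\delta$), and to extend by linearity and continuity.

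To prove $\H\subset\ker\lambda$ I would note that $\H$ is spanned by the monomials with $\beta\neq0$ or $\gamma\neq0$ and use $\langle\lambda(p\otimes q),h\rangle=(q\star h\star p)(\x)$. If $\beta\neq0$, writing $p=z^\alpha\star\bar z^\beta$ and moving $\bar z^\beta$ to the far right by associativity gives $q\star h\star p=\bar z^\beta\cdot(q\star h\star z^\alpha)$, which vanishes at $\x$. If $\gamma\neq0$, writing $q=z^\gamma\star\bar z^\delta$ and extracting $z^\gamma$ on the left gives $q\star h\star p=z^\gamma\cdot(\bar z^\delta\star h\star p)$, again vanishing at $\x$. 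This is the same computation as checking $\lambda(\bar z^l\cdot X)=0$ and $\lambda(w^k\cdot X)=0$ on the two families of generators of $\H$.

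For the ideal property I would compute on factorizable elements from $(p_1\otimes q_1)\ast(p_2\otimes q_2)=(q_1\star p_2)(\x)\,(p_1\otimes q_2)$: the surviving slots are $p_1$ and $q_2$, while the scalar is the contraction $(q_1\star p_2)(\x)$. If the left factor lies in $\H$, then either $p_1$ carries a nonconstant $\bar z$, so the output monomial $p_1\otimes q_2$ already lies in $\H$, or $q_1$ carries a nonconstant holomorphic factor, which extracted on the left forces $(q_1\star p_2)(\x)=0$ and the product is $0\in\H$. Symmetrically, if the right factor lies in $\H$, then either $q_2$ carries a nonconstant $w$, so $p_1\otimes q_2\in\H$, or $p_2$ carries a nonconstant antiholomorphic factor, which extracted on the right kills the scalar. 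Hence $\calC\ast\H\subset\H$ and $\H\ast\calC\subset\H$.

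The routine part is the bookkeeping of which coordinate survives into the output monomial; the step that needs care is the opposite case, where the ``bad'' variable sits in a contracted slot (the right factor of the left element or the left factor of the right element) and membership in $\H$ is forced not by the shape of the output but by the vanishing of the contraction scalar $(q_1\star p_2)(\x)$. Recognizing that this vanishing is exactly the separation-of-variables pull-out, and keeping straight which slot sits on which side of each $\star$ in the definitions of both $\ast$ and $\lambda$, is where the argument is most error-prone.
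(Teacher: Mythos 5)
Your proof is correct and takes essentially the same route as the paper's: both reduce by filtration-continuity to factorizable elements, and both use the separation-of-variables identities $a \star f = af$ and $f \star b = bf$ to pull a coordinate $z^k$ or $\bar z^l$ out of the relevant $\star$-product and kill it by evaluation at $\x$. The only cosmetic difference is that you work with monomials $z^\alpha\bar z^\beta \otimes z^\gamma\bar z^\delta$ where the paper uses the slightly more general generators $(\bar z^l u)\otimes v$ and $u \otimes (z^k v)$, and your case split --- output monomial already in $\H$ versus vanishing contraction scalar $(q_1 \star p_2)(\x)$ --- corresponds exactly to the paper's four computations $F \ast U^l = 0$, $U^l \ast F \in \H$, $F \ast V^k \in \H$, $V^k \ast F = 0$ and its verification that $\lambda(U^l) = \lambda(V^k) = 0$.
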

\begin{proof}
It suffices to check the statement of the lemma on the generators
\[
  U^l :=(\bar z^l u) \otimes v = (u \star \bar z^l) \otimes v \mbox{ and } V^k := u \otimes (z^kv) = u \otimes (z^k \star v)
\]
of $\H$ and factorizable $F = f \otimes g \in \calC$, where $u,v,f,g \in \F$ are arbitrary. We have
 \begin{eqnarray*}
 F \ast U^l = (f \otimes g) \ast ((u \star \bar z^l) \otimes v) =\hskip 4cm\\
  (g \star u \star \bar z^l)(\x) \cdot (f \otimes v)= 
  ((g \star u)  \bar z^l)(\x) \cdot (f \otimes v)=0,
 \end{eqnarray*}
 because $\bar z^l(\x)=0$. Then we see that
 \[
 U^l \ast F = ((\bar z^l u) \otimes v) \ast (f \otimes g) = (v \star f)(\x) \cdot ((\bar z^l u) \otimes g) \in \H.
 \]
 One can check similarly that $F \star V_k \in \H$ and $V_k \star F =0$. It follows that $\H$ is a two-sided ideal of $\calC$.
 We get from formula (\ref{E:factx}) that for any $h \in \F$,
 \[
   \langle \lambda(U^l), h \rangle = (v \star h \star u \star \bar z^l)(\x) = ((v \star h \star u)\bar z^l)(\x) =0,
 \]
 because $\bar z^l(\x)=0$. Thus, $\lambda(U^l)=0$. One can similarly check that $\lambda(V^k)=0$,
 which implies the second statement of the lemma.
\end{proof}

\begin{lemma}\label{L:isomalg}
The subspace $\G\subset \calC$ is a subalgebra of $\calC$ isomorphic to the algebra $\calC/\H$.
\end{lemma}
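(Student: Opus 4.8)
The plan is to prove the statement in two stages: first that $\G$ is closed under $\ast$, and then that the restriction to $\G$ of the canonical projection $\calC \to \calC/\H$ is an algebra isomorphism. Since $\calC=(\F^{(2)},\ast)$ is a filtered algebra (by Lemma \ref{L:filtered} and the construction of $\ast$), the product is continuous for the standard filtration, so it suffices to verify closure of $\G$ on the factorizable elements that topologically span it. I would first record that $\G = \C[[\nu, z, \bar w]]$ is the closed $\C[[\nu]]$-span of the factorizable elements $a \otimes b$ with $a \in \C[[\nu, z]]$ holomorphic in the first factor and $b \in \C[[\nu, \bar w]]$ antiholomorphic in the second factor.

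For the subalgebra claim, I would simply apply the defining formula for $\ast$ to two such generators:
\[
(a_1 \otimes b_1) \ast (a_2 \otimes b_2) = (b_1 \star a_2)(\x)\,(a_1 \otimes b_2).
\]
The scalar $(b_1 \star a_2)(\x) \in \C[[\nu]]$ is irrelevant to where the product lands; what matters is that the surviving factorizable element $a_1 \otimes b_2$ again has holomorphic first component $a_1$ and antiholomorphic second component $b_2$, hence lies in $\G$. Thus $\ast$ carries factorizable generators of $\G$ into $\G$, and by bilinearity and continuity $\G \ast \G \subseteq \G$, so $\G$ is a subalgebra. The clean point is that $\ast$ preserves exactly the data defining $\G$, namely a holomorphic first component and an antiholomorphic second component; notably, this requires nothing about the actual value of the scalar $(b_1 \star a_2)(\x)$.

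For the identification with the quotient, I would invoke the splitting $(\ref{E:split})$, $\calC = \G \oplus \H$, together with the fact from Lemma \ref{L:lsurj} that $\H$ is a two-sided ideal. The canonical projection $\pi \colon \calC \to \calC/\H$ is then an algebra homomorphism, and its restriction $\pi|_\G$ is injective because $\G \cap \H = 0$ and surjective because $\calC = \G + \H$. Since $\G$ is a subalgebra, $\pi|_\G$ is an algebra homomorphism, and being bijective it is the required isomorphism $\G \cong \calC/\H$.

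I expect the only genuine subtlety to be the reduction to factorizable generators: one must confirm that $\G$ really is the closed $\C[[\nu]]$-span of the elements $a \otimes b$ above and that the bilinear product $\ast$ extends continuously from that span to all convergent series, so that closure on generators upgrades to $\G \ast \G \subseteq \G$. This is precisely what the filtered structure supplied by Lemma \ref{L:filtered} guarantees, and everything else is formal bookkeeping.
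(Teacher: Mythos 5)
Your proposal is correct and follows essentially the same route as the paper: verify closure of $\G$ under $\ast$ on the factorizable generators $a \otimes b$ via the product formula $(a_1 \otimes b_1) \ast (a_2 \otimes b_2) = (b_1 \star a_2)(\x)\cdot(a_1 \otimes b_2)$, then identify $\G$ with $\calC/\H$ via the splitting (\ref{E:split}). The only difference is that you spell out what the paper compresses into ``clearly'' --- that $\pi|_\G$ is an algebra isomorphism because $\G \cap \H = 0$, $\calC = \G + \H$, and $\H$ is a two-sided ideal by Lemma \ref{L:lsurj} --- which is a welcome bit of explicitness, not a deviation.
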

\begin{proof}
The space $\G$ is topologically generated by the elements $a \otimes b$, where $a \in \C[[\nu, z]]$ is formally holomorphic and $b \in \C[[\nu,\bar w]]$ is antiholomorphic.
We have
\[
    (a_1 \otimes b_1) \ast  (a_2 \otimes b_2) = (b_1 \star a_2)(\x)\cdot (a_1 \otimes b_2)  \in \G.
 \]
 Therefore, $\G$ is a subalgebra of $\calC$. Clearly, it is isomorphic to the algebra $\calC/\H$.
\end{proof}

Let $\alpha: C^\infty(U)[[\nu]] \to \F$ be the mapping that maps $f$ to its jet at~$\x$. It is surjective by Borel's lemma. We define a mapping 
\[
\gamma: C^\infty(U)[[\nu]]  \to \G
\]
as follows. Given $f \in C^\infty(U)[[\nu]]$, let $\tilde f \in C^\infty(U \times \widebar U)[[\nu]]$ be an almost analytic extension of $f$. We set $\gamma(f)$ equal to the jet  of $\tilde f$ at $(\x,\x)$. This jet lies in $\G$ and does not depend on the choice of the almost analytic extension of $f$. The mapping $\gamma$ is surjective. There is a bijection $\beta: \F \to \G$ such that $\gamma=\beta \circ \alpha$. In coordinates, 
\[
\beta: f(z,\bar z) \mapsto f(z, \bar w).
\]

\begin{lemma}\label{L:lgamma}
Given $g \in C^\infty(U)[[\nu]]$, the following formula holds,
\[
    \lambda(\gamma(g)) = \delta_\x \circ B \circ g \circ B^{-1}.
\]
\end{lemma}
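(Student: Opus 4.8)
The plan is to reduce the identity to the factorizable generators of $\G$ and then read it off from the multiplication rules of the anti-Wick product. Both sides of the asserted equality are $\C[[\nu]]$-linear in $g$ and depend only on the jet of $g$ at $\x$: the right-hand side because $B$ and $B^{-1}$ are formal differential operators and the result is evaluated at $\x$, and the left-hand side because $\gamma=\beta\circ\alpha$ factors through the jet $\alpha(g)\in\F$. Both maps are moreover continuous for the standard filtration, by Lemma~\ref{L:filtered}. It therefore suffices to verify the formula on a topologically spanning family, and for this I would take the functions $g=ab$ with $a$ holomorphic and $b$ antiholomorphic, since the monomials $z^\alpha\bar z^\beta=z^\alpha\cdot\bar z^\beta$ span $\F$. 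This is precisely the family that topologically generates $\G$ in the proof of Lemma~\ref{L:isomalg}.

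Second, I would compute $\gamma$ on such a $g$. For $g=a(z)b(\bar z)$ the coordinate description $\beta\colon f(z,\bar z)\mapsto f(z,\bar w)$ gives $\gamma(g)=a(z)b(\bar w)$ (equivalently, an almost analytic extension of $g$ is $a(z)b(\bar w)$, whose $\bar\p$-derivatives already vanish on the diagonal). In the notation of $\F^{(2)}$ this is exactly the factorizable element $a\otimes b\in\G$, with $a$ on the first factor and $b$ on the second.

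Third, I would apply (\ref{E:factx}) to obtain $\langle\lambda(\gamma(g)),h\rangle=(b\star h\star a)(\x)$ for every $h\in\F$ and then unwind the star-multiplications using (\ref{E:iaib}). Since $a$ is holomorphic, $h\star a=R_a h=B(a\cdot B^{-1}h)$; since $b$ is antiholomorphic, $b\star(\cdot)=L_b(\cdot)=B(b\cdot B^{-1}(\cdot))$. Composing these and using $B^{-1}B=1$ collapses the two Berezin transforms:
\[
b\star h\star a=B\bigl(b\cdot B^{-1}B(a\cdot B^{-1}h)\bigr)=B\bigl(ab\cdot B^{-1}h\bigr)=B\bigl(g\cdot B^{-1}h\bigr).
\]
Evaluating at $\x$ yields $\langle\lambda(\gamma(g)),h\rangle=(B(g\cdot B^{-1}h))(\x)=\langle\delta_\x\circ B\circ g\circ B^{-1},h\rangle$, which is the claimed identity on the generators.

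The algebraic core above is just a short telescoping of two instances of (\ref{E:iaib}), so the main technical point is the passage from the generators $g=ab$ to all of $C^\infty(U)[[\nu]]$. One must confirm that both sides are genuinely continuous in the standard-filtration topology, so that agreement on the spanning monomials $z^\alpha\bar z^\beta$ propagates to their completed span; I would invoke Lemma~\ref{L:filtered} to license this limiting argument, exactly as for the other extensions of $\star$-related maps used in this section.
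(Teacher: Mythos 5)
Your proposal is correct and takes essentially the same route as the paper's proof: both reduce to the factorizable case $g=ab$ with $\gamma(ab)=a\otimes b$, telescope the two instances of (\ref{E:iaib}) to obtain $L_bR_a=B\circ(ab)\circ B^{-1}$, and then extend to generic $g$ using that the distribution depends only on the jet of $g$ at $\x$ and that $\F$ is topologically generated by the jets of products $ab$. The only cosmetic difference is that you compute through the pairing $\langle\lambda(\gamma(g)),h\rangle=(b\star h\star a)(\x)$ via (\ref{E:factx}), whereas the paper manipulates the operators $\delta_\x\circ N_{\tau(a\otimes b)}=\delta_\x\circ(R_aL_b)$ directly.
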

\begin{proof}
Let $g = ab$, where $a$ is a holomorphic and $b$ is an antiholomorphic function on $U$. Then $\gamma(g)= a \otimes b$. Using formula (\ref{E:iaib}), we get that
 \begin{eqnarray*}
     \lambda(\gamma(g)) = \delta_\x \circ N_{\tau(a \otimes b)}= \delta_\x \circ N_{b \otimes a} =
      \delta_\x \circ (R_aL_b) =\\
       \delta_\x \circ B \circ (ab) \circ B^{-1} = \delta_\x \circ B \circ g \circ B^{-1}.
 \end{eqnarray*}
 For a generic $g \in C^\infty(U)[[\nu]]$, the distribution $\delta_\x \circ B \circ g \circ B^{-1}$ depends only on the jet of $g$ at $\x$ and the space $\F$ is topologically generated by the elements $\alpha(ab)$. Therefore, the lemma follows from the calculation above.
\end{proof}
\begin{lemma}\label{L:linj}
The restriction of the mapping $\lambda$ to $\G$, $\lambda|_\G : \G \to \Ncal$, is injective.
\end{lemma}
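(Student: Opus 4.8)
The plan is to reduce the injectivity of $\lambda|_\G$ to the nondegeneracy of the pairing associated with a single formal oscillatory integral. First I would take an arbitrary $G \in \G$ with $\lambda(G) = 0$ and use the surjectivity of $\gamma$ to write $G = \gamma(g)$ for some $g \in C^\infty(U)[[\nu]]$. By Lemma \ref{L:lgamma} the hypothesis becomes $\delta_\x \circ B \circ g \circ B^{-1} = 0$ as a distribution, i.e. $\langle \delta_\x \circ B \circ g \circ B^{-1}, h\rangle = 0$ for all $h \in \F$. Since the formal Berezin transform $B$ is oscillatory (Lemma \ref{L:bertr}) and hence invertible, composing on the right with $B$ cancels the factor $B^{-1}$ and yields $\delta_\x \circ B \circ g = 0$. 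Unwinding the definitions, this says that $B(g \cdot h)(\x) = 0$ for every jet $h \in \F$.

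The key observation is that $\delta_\x \circ B$ is precisely the functional $K^{(1)}(f) = (Bf)(\x)$, which was recalled in Section \ref{S:sep} to be a FOI at $\x$. Consequently the pairing $(f_1, f_2)_{\delta_\x \circ B} = \langle \delta_\x \circ B, f_1 f_2\rangle = B(f_1 f_2)(\x)$ of (\ref{E:pairl}) descends to $\J_\x[[\nu]] = \F$, and by Lemma \ref{L:nondeg} it is nondegenerate. The relation $\delta_\x \circ B \circ g = 0$ says exactly that $\langle g, h\rangle_{\delta_\x \circ B} = B(g\cdot h)(\x) = 0$ for every $h$, i.e. that $g$ pairs trivially with all of $\F$; nondegeneracy then forces the jet $\alpha(g)$ of $g$ at $\x$ to vanish.

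Finally I would conclude using the factorization $\gamma = \beta \circ \alpha$ together with the fact that $\beta \colon \F \to \G$ is a bijection: from $\alpha(g) = 0$ we obtain $G = \gamma(g) = \beta(\alpha(g)) = 0$. Hence $\ker(\lambda|_\G) = 0$, which is the assertion of the lemma.

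I expect the only conceptual step to be the identification of $\delta_\x \circ B$ with the FOI $K^{(1)}$, since this is what unlocks the nondegeneracy statement of Lemma \ref{L:nondeg}; everything else is formal manipulation with the invertibility of $B$ and the surjectivity of $\gamma$. The one point to handle with care is that the vanishing $\lambda(G) = 0$ is vanishing of a \emph{distribution}, so it must be tested against all jets $h \in \F$ in order to deduce the vanishing of the entire functional $\langle g, \cdot\rangle_{\delta_\x \circ B}$ rather than of a single value; once this is made explicit, the nondegenerate pairing does the rest.
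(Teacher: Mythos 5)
Your proposal is correct and follows essentially the same route as the paper's proof: write $G=\gamma(g)$, use Lemma \ref{L:lgamma} to translate $\lambda(G)=0$ into $B(g\cdot B^{-1}h)(\x)=0$ for all $h$, absorb the invertible factor $B^{-1}$ (your substitution $h \mapsto Bh$ is exactly the paper's remark that $B^{-1}h$ is an arbitrary element of $C^\infty(U)[[\nu]]$), and then invoke the fact from \cite{KS} that $f \mapsto (Bf)(\x)$ is a FOI together with Lemma \ref{L:nondeg} to conclude that the jet of $g$ at $\x$ vanishes, hence $G=\beta(\alpha(g))=0$. Your explicit identification of $\delta_\x \circ B$ with $K^{(1)}$ and your care about testing the distributional vanishing against all $h$ are both sound and match the paper's argument in substance.
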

\begin{proof}
  Let $G$ be an arbitrary element of $\G$ which lies in the kernel of~$\lambda$. There exists $g \in C^\infty(U)[[\nu]]$ such that $G = \gamma(g)$. Then for any $h \in C^\infty(U)[[\nu]]$ we have from Lemma \ref{L:lgamma} that
 \begin{equation}\label{E:izero}
      B(g \cdot B^{-1}h)(\x)= \langle \lambda(\gamma(g)), h \rangle = \langle \lambda(G),h\rangle = 0.
  \end{equation}
  It was proved in \cite{KS} that the distribution $f \mapsto (Bf)(\x)$ is a FOI at $\x$. By Lemma \ref{L:nondeg}, the pairing $u, v \mapsto B(u \cdot v)(\x)$ on $C^\infty(U)[[\nu]]$ induces a nondegenerate pairing on $\F$. Since $B^{-1}h$ is an arbitrary element of $C^\infty(U)[[\nu]]$, we see from (\ref{E:izero}) that the jet of $g$ at $\x$ is zero. Therefore, $G=0$, whence the lemma follows.
\end{proof}
\begin{corollary}\label{C:restl}
  The ideal $\H$ is the kernel of the mapping $\lambda$ and the mapping $\lambda|_\G : \G \to \Ncal$ is bijective.
\end{corollary}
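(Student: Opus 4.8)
The plan is to assemble the Corollary from three ingredients that are already available: the inclusion $\H \subseteq \ker\lambda$ together with the ideal property (Lemma~\ref{L:lsurj}), the injectivity of $\lambda|_\G$ (Lemma~\ref{L:linj}), and the direct-sum decomposition $\calC = \G \oplus \H$ of (\ref{E:split}). The one piece of genuinely new information I will need is that $\lambda$ maps \emph{onto} $\Ncal$.

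First I would pin down the kernel. Since $\H \subseteq \ker\lambda$ already, only the reverse inclusion is at issue. Given $C \in \ker\lambda$, decompose $C = G + H$ with $G \in \G$ and $H \in \H$; applying $\lambda$ and using $\lambda(H)=0$ yields $\lambda(G)=\lambda(C)=0$, so $G=0$ by Lemma~\ref{L:linj}, and thus $C = H \in \H$. This gives $\ker\lambda = \H$ and, as a by-product, $\G \cap \ker\lambda = \{0\}$, so that $\lambda(\calC) = \lambda(\G)$.

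It remains to show that $\lambda$ is surjective, for then $\lambda(\G) = \lambda(\calC) = \Ncal$, and together with the injectivity of $\lambda|_\G$ this proves the bijectivity. Here I would invoke Theorem~\ref{T:isom}: because $M$ is symplectic and the anti-Wick product $\star$ is natural, $F \mapsto N_F$ is an isomorphism of $\mathbb{B}$ onto $\mathfrak{N}$, and precomposing with the involution $\tau$ shows that $F \mapsto N_{\tau(F)}$ is likewise a bijection of $\mathbb{B}$ onto $\mathfrak{N}$. Now take an arbitrary $\Lambda \in \Ncal$. By the fact recalled in Section~\ref{S:star} that every natural distribution supported at $\x$ has the form $\delta_\x \circ X$ for some natural operator $X$, choose such an $X$ with $\Lambda = \delta_\x \circ X$, and then pick $F \in \mathbb{B}$ with $N_{\tau(F)} = X$. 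Passing to the jet of $F$ at $(\x,\x)$, we obtain $\Lambda_F = \delta_\x \circ N_{\tau(F)} = \delta_\x \circ X = \Lambda$, so that $\Lambda = \lambda(\text{jet of } F)$ lies in the image of $\lambda$.

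The step bearing the real weight is this surjectivity, and the point to treat with care is the passage from the global operators of Theorem~\ref{T:isom} to the jet-level map $\lambda$: one must check that $\mathrm{im}(\lambda)$ is exactly $\{\delta_\x \circ X : X \in \mathfrak{N}\}$, which is legitimate because $\Lambda_F$ depends only on the jet of $F$ while $N_{\tau(F)}$ exhausts all of $\mathfrak{N}$. Everything else is the bookkeeping of the splitting (\ref{E:split}).
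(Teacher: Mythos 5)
Your proof is correct and follows essentially the same route as the paper: the paper's proof simply asserts the surjectivity of $\lambda$ and then combines the splitting (\ref{E:split}) with Lemmas \ref{L:lsurj} and \ref{L:linj}, exactly as you do. The only difference is that you spell out the surjectivity via Theorem \ref{T:isom}, the involution $\tau$, and the representation $\Lambda = \delta_\x \circ X$ with $X \in \mathfrak{N}$ --- which is precisely the justification implicit in the paper's one-line claim, so your write-up is a faithful (and slightly more detailed) version of the intended argument.
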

\begin{proof}
The mapping $\lambda$ is surjective. It was proved in Lemma \ref{L:lsurj} that $\H$ lies in the kernel of $\lambda$. The corollary follows from the splitting (\ref{E:split}) and Lemma~\ref{L:linj}.
\end{proof}
\section{The algebra of distributions}\label{S:algd}

Corollary \ref{C:restl} implies that one can transfer the product $\ast$ from the algebra $\calC$ to $\Ncal$. We denote the resulting product on $\Ncal$ by $\bullet$. It follows from (\ref{E:lambdatrace}) and Lemma \ref{L:isomalg} that
the algebra $(\Ncal,\bullet)$ is isomorphic to the algebra $(\G,\ast) \cong \calC/\H$. The mapping
\begin{equation}\label{E:trn}
\Ncal \ni u \mapsto \langle u,1\rangle
\end{equation}
is a trace on the algebra $(\Ncal, \bullet)$. Its pullback via the mapping $\lambda$ is the trace $\tr$ on $\calC$.

In the rest of the paper we will express the trace of the product of $l$ elements of the algebra $(\Ncal,\bullet)$ in terms of the formal $l$-point Calabi function of the star product $\star$.

A differential operator on $M$ induces an operator on $\F$ which we call a differential operator on $\F$. The standard filtration on $\F$ induces a filtration on the operators on $\F$, which we also call standard.  Namely, the filtration degree of an operator $A$ on $\F$ is the largest integer $k$ such that $A(F_r\F) \subset F_{r+k}\F$ for all $r$ (we assume that $F_r\F=\F$ for $r <0$). If $A$ is a differential operator of order $r$ which does not depend on $\nu$, its filtration degree is at least $-r$. We denote by~$\mathfrak{N}_\x$ the algebra of natural operators on $\F$. These operators are induced by the operators from $\mathfrak{N}$. Observe that if $N = N_0 + \nu N_1 + \ldots$ is a natural operator, then the filtration degree of $\nu^r N_r$ is at least $r$.

In the remainder of this section $\varphi= \nu^{-1}\varphi_{-1}+\varphi_0 + \ldots$ is a formal function on $M$ such that $\x$ is a critical point of $\varphi_{-1}$ with zero critical value, $\varphi_{-1}(\x)=0$. We do not assume that the critical point $\x$ is nondegenerate. Observe that the filtration degree of $\varphi$ is a least zero. Recall that the adjoint action of an operator $A$ on an operator $B$ is denoted by $\ad(A):B \mapsto [A,B]$.
\begin{lemma}\label{L:ene}
If $N \in \mathfrak{N}_\x$, then $e^{-\varphi} N e^\varphi \in \mathfrak{N}_\x$.
\end{lemma}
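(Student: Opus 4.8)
The plan is to regard conjugation $\Psi\colon A\mapsto e^{-\varphi}Ae^\varphi$ as an algebra automorphism of the algebra of operators on $\F$ with coefficients in $\C((\nu))$: the multiplication operator $e^\varphi$ is a well-defined unit of $\F((\nu))$ with inverse $e^{-\varphi}$, so conjugation by it is an automorphism. To prove $\Psi(\mathfrak{N}_\x)\subset\mathfrak{N}_\x$ I would test $\Psi$ on generators. Every natural operator is a combination of products of multiplication operators $f\in\F$ and the operators $\nu\p_i$, where $\p_i=\p/\p z^k,\p/\p\bar z^l$: writing $N=\sum_r\nu^rN_r$ with $\mathrm{ord}\,N_r\le r$ and expanding $N_r$ in the $\p^\alpha$, $|\alpha|\le r$, one has $\nu^r f\,\p^\alpha=\nu^{\,r-|\alpha|}f\,(\nu\p)^\alpha$ with $r-|\alpha|\ge 0$ since $\nu$ is central.

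Multiplication operators commute with $e^{\pm\varphi}$, so $\Psi(f)=f$. For the first-order generators I would use the exact identity $e^{-\varphi}\p_i e^\varphi=\p_i+\p_i\varphi$ (the $\ad$-series stops after one step, since $[\varphi,\p_i]=-\p_i\varphi$ is already a multiplication operator), giving
\[
\Psi(\nu\p_i)=\nu\p_i+\nu\,\p_i\varphi .
\]
Because $\varphi$ begins at $\nu^{-1}$, the function $\nu\,\p_i\varphi=\p_i\varphi_{-1}+\nu\,\p_i\varphi_0+\dots$ involves only nonnegative powers of $\nu$, so it is a natural multiplication operator and $\Psi(\nu\p_i)\in\mathfrak{N}_\x$. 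Since $\Psi$ is a homomorphism, it sends every finite product of generators, hence every $\nu^rN_r$, into $\mathfrak{N}_\x$.

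It remains to control convergence of $\Psi(N)=\sum_r\Psi(\nu^rN_r)$, and this is where the hypothesis on $\varphi$ enters. The critical-point condition $d\varphi_{-1}(\x)=0$ forces $\p_i\varphi_{-1}$ to vanish at $\x$, so the most singular part $\nu^{-1}\p_i\varphi_{-1}$ of $\p_i\varphi$ has standard filtration degree $\ge-1$; thus each factor $\p_i+\p_i\varphi$ appearing in $\Psi(\p^\alpha)=\prod_j(\p_{i_j}+\p_{i_j}\varphi)$ has filtration degree $\ge-1$, exactly as $\p_i$ does. Consequently $\Psi(\nu^rN_r)=\nu^r\sum_\alpha f_\alpha\,\Psi(\p^\alpha)$ has filtration degree at least $2r-r=r$, so the series converges in the completion justified by Lemma~\ref{L:filtered}; as each summand is natural, the limit lies in $\mathfrak{N}_\x$.

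The main obstacle to keep in mind is that $e^\varphi$ is not $\nu$-nonnegative: it contains $e^{\nu^{-1}\varphi_{-1}}$ with arbitrarily negative powers of $\nu$, so a priori $\Psi$ could introduce negative powers of $\nu$ and leave $\mathfrak{N}_\x$. The computations above show that these survive only through the combination $\nu\,\p_i\varphi$, which is $\nu$-nonnegative, and that the critical-point condition is precisely what prevents the filtration degrees from dropping and so guarantees convergence; the nondegeneracy of the critical point is not used.
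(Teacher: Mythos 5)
Your proof is correct, and it reorganizes rather than reproduces the paper's argument. The paper conjugates each component $\nu^r N_r$ directly, using the fact that the adjoint series terminates: since $N_r$ has order at most $r$ and $\ad\varphi$ lowers the order by one,
\[
e^{-\varphi}(\nu^r N_r)e^\varphi=\sum_{k=0}^r\frac{1}{k!}(-\ad\varphi)^k(\nu^r N_r),
\]
and the bookkeeping on this finite sum (each $\ad\varphi$ costs at most one power of $\nu$ but gains one order, so the $k$-th term has $\nu$-degree at least $r-k\geq 0$ and order at most $r-k$) gives naturality, while the observation that the filtration degree of $\varphi$ is at least zero gives standard filtration degree at least $r$ and hence convergence of the sum over $r$. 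You instead factor $\nu^r N_r$ into the generators $f$ and $\nu\p_i$, conjugate the generators exactly via $e^{-\varphi}\p_i e^\varphi=\p_i+\p_i\varphi$, and invoke multiplicativity; your estimate that $\p_i+\p_i\varphi$ has filtration degree at least $-1$ (using $d\varphi_{-1}(\x)=0$) is the same estimate the paper extracts from the $\ad$-series, and your convergence step coincides with the paper's. What your route buys is transparency: the one-step formula makes visible that only derivatives of $\varphi$ enter, so the zero critical value $\varphi_{-1}(\x)=0$ is irrelevant for this lemma, and, as you observe, nondegeneracy of the critical point is never used (which the paper also remarks). What the paper's route buys is that it never needs an ambient algebra containing $e^\varphi$: every conjugation is defined by a manifestly finite formula.

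That last point is where your one inaccuracy lies: $e^\varphi$ is \emph{not} a unit of $\F((\nu))$. The series $e^{\nu^{-1}\varphi_{-1}}=\sum_k\frac{1}{k!}\nu^{-k}\varphi_{-1}^k$ contains unboundedly negative powers of $\nu$, so it does not belong to the space of Laurent series with finitely many negative powers of $\nu$; it lives only in the larger completion consisting of series $\sum_{r\in\Z}\nu^r f_r$ in which the order of vanishing of $f_r$ at $\x$ tends to infinity as $r\to-\infty$ --- and even membership there already uses the critical-point hypothesis, since $\varphi_{-1}$ must vanish to order two. This is repairable in two ways: either work in that completed algebra, which is closed under multiplication and in which $e^{\pm\varphi}$ are mutually inverse, so that $\Psi$ is an honest conjugation automorphism; or dispense with the ambient algebra altogether by defining $\Psi$ on generators and verifying multiplicativity on finite products through terminating commutator expansions --- which is exactly the paper's mechanism. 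With either repair, your argument is complete as written.
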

\begin{proof}
Assume that $N = N_0 + \nu N_1 + \ldots \in \mathfrak{N}_\x$. Then for each $r\geq 0$ the formal differential operator
\begin{equation}\label{E:ene}
    e^{-\varphi} (\nu^r N_r) e^\varphi = \sum_{k=0}^r \frac{1}{k!} (-\ad \varphi)^k (\nu^r N_r)
\end{equation}
is of order not greater than $r$. The operator (\ref{E:ene}) is natural and its $\nu$-filtration degree is at least zero. Its standard filtration degree is at least $r$. Therefore, the series
\[
     e^{-\varphi}Ne^\varphi = \sum_{r=0}^\infty e^{-\varphi} (\nu^r N_r) e^\varphi 
\]
converges to an element of $\mathfrak{N}_\x$ in the topology associated to the standard filtration.
\end{proof}

Below we define an action $e^\varphi: u \mapsto u \circ e^\varphi$ on $\Ncal$ which behaves like a composition.  However, the multiplication operator by the formal oscillatory exponent $e^\varphi$ is not a natural operator, because the Taylor series of $e^\varphi$ at $\x$ contains negative powers of $\nu$. 

Given $u \in \Ncal$, there exists $N \in \mathfrak{N}_\x$ such that $u = \delta_\x \circ N$. We set
\[
    u \circ e^\varphi := e^{\varphi(\x)} \delta_\x \circ (e^{-\varphi} N e^\varphi).
\]
Since $\varphi_{-1}(\x)=0$, we see that $e^{\varphi(\x)} \in \C[[\nu]]$. By Lemma \ref{L:ene}, $u \circ e^\varphi$ is an element of $\Ncal$. We will show that it does not depend on the choice of $N$.
\begin{lemma}
   If $u$ has two different representations $u = \delta_\x \circ N = \delta_\x \circ {\tilde N}$ for $N, \tilde N \in \mathfrak{N}_\x$,
   then $e^{\varphi(\x)} \delta_\x \circ (e^{-\varphi} N e^\varphi) = e^{\varphi(\x)} \delta_\x \circ (e^{-\varphi} \tilde N e^\varphi)$.
\end{lemma}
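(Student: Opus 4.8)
The plan is to reduce the claim to a single operator and then observe that conjugation by $e^\varphi$ leaves the leading coefficient functions untouched. Set $D := N - \tilde N$; then $D \in \mathfrak{N}_\x$, and since $\delta_\x \circ N = \delta_\x \circ \tilde N = u$ we have $\delta_\x \circ D = 0$. Because the map $A \mapsto e^{-\varphi} A e^\varphi$ is linear and $e^{\varphi(\x)} \in \C[[\nu]]$ is a common, invertible scalar, the asserted equality is equivalent to $\delta_\x \circ (e^{-\varphi} D e^\varphi) = 0$. By Lemma \ref{L:ene} the operator $e^{-\varphi} D e^\varphi$ lies in $\mathfrak{N}_\x$, so this is a genuine element of $\Ncal$ and the statement makes sense.

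Next I would pass to the coordinate chart $U$ and write $D = \sum_{r} \nu^r \sum_{|\alpha| \le r} a_{r,\alpha}(x)\, \partial^\alpha$. Then $\delta_\x \circ D$ sends $f$ to $\sum_{r,\alpha} \nu^r a_{r,\alpha}(\x)\, (\partial^\alpha f)(\x)$, so the hypothesis $\delta_\x \circ D = 0$ is equivalent to the vanishing of every coefficient at the base point, $a_{r,\alpha}(\x) = 0$ for all $r$ and all multi-indices $\alpha$. This is the only way in which the hypothesis will be used.

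The computational core is the conjugation identity. One has $e^{-\varphi} \partial_i e^\varphi = \partial_i + (\partial_i \varphi) =: \nabla_i$, and the operators $\nabla_i$ commute, since $[\nabla_i,\nabla_j]$ reduces to $\partial_i \partial_j \varphi - \partial_j \partial_i \varphi = 0$; hence $e^{-\varphi} \partial^\alpha e^\varphi = \nabla^\alpha$. As the multiplication operators $a_{r,\alpha}$ commute with $e^{\pm\varphi}$, it follows that $e^{-\varphi} D e^\varphi = \sum_r \nu^r \sum_{|\alpha| \le r} a_{r,\alpha}(x)\, \nabla^\alpha$, with exactly the same coefficient functions $a_{r,\alpha}$ appearing on the left. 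Applying $\delta_\x$ then gives $\sum_{r,\alpha} \nu^r a_{r,\alpha}(\x)\,(\nabla^\alpha f)(\x)$, which vanishes term by term because $a_{r,\alpha}(\x) = 0$. This yields $\delta_\x \circ (e^{-\varphi} D e^\varphi) = 0$ and hence the lemma.

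The step I expect to need the most care is legitimizing these manipulations in spite of the negative powers of $\nu$ carried by $e^\varphi$: the exponential $e^\varphi$ is not a natural operator, and $e^\varphi f$ does not lie in $\F$. The resolution is that one never has to apply $D$ to an object with negative $\nu$-powers, because the two exponentials cancel as soon as they are commuted past the multiplication operators $a_{r,\alpha}$, leaving the manifestly natural operator $\sum_r \nu^r \sum_\alpha a_{r,\alpha}\nabla^\alpha$; Lemma \ref{L:ene} is exactly what guarantees that the corresponding series converges in the topology of the standard filtration. The fact that $\x$ is a critical point of $\varphi_{-1}$ ensures in addition that the values $(\nabla^\alpha f)(\x)$ carry no negative $\nu$-powers, but this is not even needed here, since every term is already killed by $a_{r,\alpha}(\x) = 0$.
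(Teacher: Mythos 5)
Your proof is correct and takes essentially the same route as the paper's: the paper uses $\delta_\x\circ(N-\tilde N)=0$ to write $N-\tilde N = z^k A_k + \bar z^l B_l$ with $A_k,B_l\in\mathfrak{N}_\x$ and then uses that multiplication operators commute with $e^{\pm\varphi}$, which is precisely your observation that conjugation turns $\partial^\alpha$ into $\nabla^\alpha$ while leaving the left coefficients $a_{r,\alpha}$, all vanishing at $\x$, in place. One parenthetical aside is false (since $\partial_i\partial_j\varphi_{-1}(\x)$ need not vanish at the critical point, the values $(\nabla^\alpha f)(\x)$ do in general carry negative powers of $\nu$), but as you yourself note it is never used, so nothing in the argument breaks.
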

\begin{proof}
We have $\delta_\x \circ (N - \tilde N)=0$. Therefore, in coordinates, one can write $N - \tilde N = z^k A_k + \bar z^l B_l$ for some $A_k, B_l  \in \mathfrak{N}_\x$. We need to show that
\[
    e^{\varphi(\x)} \delta_\x \circ (e^{-\varphi} (N - \tilde N) e^\varphi)=0,
\]
which follows from the observation that
\[
   e^{-\varphi} (N - \tilde N) e^\varphi = z^k e^{-\varphi} A_k e^\varphi + \bar z^l e^{-\varphi} B_l e^\varphi
\]
and the fact that $z^k(\x)=\bar z^l(\x)=0$ for all $k,l$.
\end{proof}
\begin{lemma}\label{L:compe}
Let $\varphi= \nu^{-1}\varphi_{-1}+\varphi_0 + \ldots$ and $\psi= \nu^{-1} \psi_{-1}+\psi_0 + \ldots$ be formal functions on $M$ such that $\x$ is a critical point of $\varphi_{-1}$ and $\psi_{-1}$ with zero critical value, $\varphi_{-1}(\x) = \psi_{-1}(\x)=0$. Then for any $u \in \Ncal$ one has
\[
     (u \circ e^\varphi) \circ e^\psi = u \circ e^{\varphi+\psi}.
\]
\end{lemma}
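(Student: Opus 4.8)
The plan is to unwind the definition of the action $\circ\, e^\varphi$ twice and reduce the identity to two elementary facts: that the formal scalar $e^{\varphi(\x)}\in\C[[\nu]]$ commutes with every operator, and that conjugation by the oscillatory exponentials is additive in the exponent, in the sense that $e^{-\psi}(e^{-\varphi}Ne^\varphi)e^\psi = e^{-(\varphi+\psi)}Ne^{\varphi+\psi}$. Granting these, the scalars combine via $e^{\psi(\x)}e^{\varphi(\x)}=e^{(\varphi+\psi)(\x)}$ and the result falls out by comparing with the definition of $u\circ e^{\varphi+\psi}$.

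First I would fix a representation $u=\delta_\x\circ N$ with $N\in\mathfrak{N}_\x$ and record that $u\circ e^\varphi=\delta_\x\circ N'$, where $N':=e^{\varphi(\x)}e^{-\varphi}Ne^\varphi$. Since $\varphi_{-1}(\x)=0$, the factor $e^{\varphi(\x)}$ lies in $\C[[\nu]]$, and by Lemma \ref{L:ene} the conjugate $e^{-\varphi}Ne^\varphi$ lies in $\mathfrak{N}_\x$; multiplying a natural operator by a formal scalar preserves naturality, so $N'\in\mathfrak{N}_\x$ and the second application of the action is legitimate. Applying the definition again gives $(u\circ e^\varphi)\circ e^\psi=e^{\psi(\x)}\,\delta_\x\circ(e^{-\psi}N'e^\psi)$. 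Pulling the scalar $e^{\varphi(\x)}$ out of the conjugation (it commutes with $e^{\pm\psi}$) and combining the two scalars into $e^{(\varphi+\psi)(\x)}$ leaves me to identify $e^{-\psi}(e^{-\varphi}Ne^\varphi)e^\psi$ with $e^{-(\varphi+\psi)}Ne^{\varphi+\psi}$, after which the right-hand side is exactly $u\circ e^{\varphi+\psi}$. Here I would note that $\x$ is a critical point of $(\varphi+\psi)_{-1}=\varphi_{-1}+\psi_{-1}$ with zero critical value, so $u\circ e^{\varphi+\psi}$ is defined and $e^{(\varphi+\psi)(\x)}\in\C[[\nu]]$.

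The main obstacle is precisely this additivity of the conjugation, because the exponentials $e^{\pm\varphi}$ are not operators on $\F$ in any naive sense: their expansions at $\x$ carry unbounded negative powers of $\nu$, so the expression cannot be read off by literally composing three multiplication operators. I would handle it exactly as in the proof of Lemma \ref{L:ene}, interpreting $e^{-\varphi}Ne^\varphi$ as $\sum_{k\geq 0}\frac{1}{k!}(-\ad\varphi)^kN$, a sum that terminates on each graded piece $\nu^rN_r$ because $\ad\varphi$ of a multiplication operator lowers differential order. Since $\varphi$ and $\psi$ are formal functions, the multiplication operators commute, whence $[\ad\varphi,\ad\psi]=\ad([\varphi,\psi])=0$; for commuting operators $e^{-\ad\psi}e^{-\ad\varphi}=e^{-\ad(\varphi+\psi)}$, and this identity is valid here since on each $\nu^rN_r$ all three exponentials are finite sums. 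Translating back gives the claimed additivity, and the computation of the previous paragraph then closes the argument. Finally I would remark that the whole computation is independent of the chosen representative $N$ of $u$, since the preceding lemma guarantees that the action $\circ\, e^\psi$ applied to $u\circ e^\varphi$ is well defined.
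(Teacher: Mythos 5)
Your proof is correct and takes essentially the same route as the paper's: the paper likewise unwinds the definition twice, writing $(u \circ e^\varphi) \circ e^\psi = e^{\varphi(\x)+\psi(\x)}\, \delta_\x \circ \left(e^{-\psi} e^{-\varphi} N e^\varphi e^\psi\right)$ and identifying this with $u \circ e^{\varphi+\psi}$, leaving the additivity of conjugation implicit since multiplication operators commute. Your explicit justification of that step through the terminating $\ad$-expansions of Lemma \ref{L:ene} is precisely the interpretation on which the paper's definition of $e^{-\varphi}Ne^\varphi$ already rests, so it fills in detail rather than changing the argument.
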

\begin{proof}
Let $N \in \mathfrak{N}_\x$ be such that $u = \delta_\x \circ N$. Then
\begin{eqnarray*}
    (u \circ e^\varphi) \circ e^\psi  = (e^{\varphi(\x)} \delta_\x \circ (e^{-\varphi} N e^\varphi)) \circ e^\psi =\\
    e^{\varphi(\x)+\psi(\x)} \delta_\x \circ (e^{-\psi} e^{-\varphi} N e^\varphi e^\psi) = u \circ e^{\varphi+\psi}.
\end{eqnarray*}
\end{proof}
We introduce a $\nu$-linear functional $K:\Ncal \to \C[[\nu]]$,
\[
      K(u) := \langle u \circ e^\varphi, 1 \rangle.
\]
If $A$ is a differential operator on $M$, we denote by $A^t$ its transpose that acts on a distribution $u$ as $A^t u := u \circ A$. Let $v$ be a vector field on $M$. Since $\nu v$ and $\nu v\varphi$ are natural operators, then for $u \in \Ncal$ we get that $(\nu v -\nu v \varphi)^tu \in \Ncal$.

\begin{lemma}\label{L:vt}
For any $u \in \Ncal$, $((\nu v -\nu v \varphi)^tu) \circ e^\varphi= (u \circ e^\varphi) \circ (\nu v)$.
\end{lemma}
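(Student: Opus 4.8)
The plan is to reduce both sides to the form $e^{\varphi(\x)}\delta_\x \circ (\,\cdot\,)$ for an explicit natural operator and then observe that the two operators coincide because of a single Leibniz-type identity. First I would fix a representative $N \in \mathfrak{N}_\x$ with $u = \delta_\x \circ N$. Since $\nu v$ and $\nu v\varphi$ are natural and $\mathfrak{N}$ is an associative algebra, the composition $M := N\circ(\nu v - \nu v\varphi)$ is again natural, and by the transpose convention $(\nu v - \nu v\varphi)^t u = u \circ (\nu v - \nu v\varphi) = \delta_\x \circ M$. Applying the definition of $\circ e^\varphi$ (legitimate by Lemma \ref{L:ene} and independent of the chosen representative) gives $((\nu v - \nu v\varphi)^t u)\circ e^\varphi = e^{\varphi(\x)}\,\delta_\x \circ \big(e^{-\varphi} N (\nu v - \nu v\varphi) e^\varphi\big)$. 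On the other side, $u \circ e^\varphi = e^{\varphi(\x)}\delta_\x \circ (e^{-\varphi}Ne^\varphi)$, so composing with the natural operator $\nu v$ yields $(u\circ e^\varphi)\circ(\nu v) = e^{\varphi(\x)}\,\delta_\x \circ \big((e^{-\varphi}Ne^\varphi)(\nu v)\big)$.

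The heart of the argument is the operator identity $(\nu v - \nu v\varphi)\circ e^\varphi = e^\varphi \circ (\nu v)$, where $e^\varphi$ denotes multiplication by the formal oscillatory exponent. I would prove it by the Leibniz rule: for any $f$ one has $v(e^\varphi f) = (v\varphi)e^\varphi f + e^\varphi(vf)$, so after multiplying by $\nu$ and subtracting the multiplication term $\nu(v\varphi)e^\varphi f$ one is left with exactly $e^\varphi(\nu v f)$. This is an exact identity of operators, not merely an identity modulo functions vanishing at $\x$.

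Feeding this into the left-hand side, $e^{-\varphi} N(\nu v - \nu v\varphi)e^\varphi = e^{-\varphi}N\big((\nu v - \nu v\varphi)e^\varphi\big) = e^{-\varphi}N e^\varphi(\nu v) = (e^{-\varphi}Ne^\varphi)(\nu v)$, which is literally the operator appearing on the right-hand side; since the scalar prefactors $e^{\varphi(\x)}$ agree, the two distributions are equal. The main obstacle is not analytic but bookkeeping: one must keep the transpose and composition conventions consistent and, crucially, verify that every operator conjugated by $e^{\pm\varphi}$ stays natural---so that $\circ e^\varphi$ is defined and the manipulations take place inside $\mathfrak{N}_\x$---which is guaranteed by Lemma \ref{L:ene}. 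All of the actual content is concentrated in the one-line Leibniz identity.
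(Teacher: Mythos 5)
Your proof is correct and follows the same route as the paper's: represent $u=\delta_\x\circ N$, unfold the definitions of the transpose and of $\circ\, e^\varphi$, and conclude from the conjugation identity $e^{-\varphi}\circ(\nu v-\nu v\varphi)\circ e^\varphi=\nu v$, which the paper invokes in exactly the same role (you merely make its one-line Leibniz verification explicit). No gaps.
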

\begin{proof}
Assume that $u = \delta_\x \circ N$ for some $N \in \mathfrak{N}_\x$. Then
\[
    (\nu v -\nu v \varphi)^tu = \delta_\x \circ N \circ (\nu v - \nu v \varphi).
\]
Therefore,
\begin{eqnarray*}
((\nu v -\nu v \varphi)^tu) \circ e^\varphi = e^{\varphi(\x)} \delta_\x \circ (e^{-\varphi} (N \circ (\nu v - \nu v \varphi)) e^\varphi)=\\
e^{\varphi(\x)} \delta_\x \circ (e^{-\varphi} N e^\varphi) \circ (\nu v) = (u \circ e^\varphi) \circ (\nu v),
\end{eqnarray*}
because $e^{-\varphi} \circ (v -  v \varphi) \circ e^\varphi = v$.
\end{proof}
\begin{corollary}
For any $u \in \Ncal$, $K((\nu v -\nu v \varphi)^tu)=0$.
\end{corollary}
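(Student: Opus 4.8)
The plan is to reduce the statement directly to Lemma~\ref{L:vt} together with the elementary fact that a vector field annihilates constants. By the definition of the functional $K$, we have $K((\nu v -\nu v \varphi)^tu) = \langle ((\nu v -\nu v \varphi)^tu) \circ e^\varphi, 1 \rangle$. The first step is to invoke Lemma~\ref{L:vt}, which rewrites the composed distribution $((\nu v -\nu v \varphi)^tu) \circ e^\varphi$ as $(u \circ e^\varphi) \circ (\nu v)$. This is the only substantial input, and it has already been established, so no work is duplicated.

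Next I would unwind the composition with the differential operator $\nu v$. Setting $w := u \circ e^\varphi \in \Ncal$, the composition $w \circ (\nu v)$ is by definition the transpose action $(\nu v)^t w$, and the convention $A^t u = u \circ A$ gives $\langle w \circ (\nu v), h\rangle = \langle w, (\nu v)h\rangle$ for any test function $h$. Taking $h = 1$ therefore moves the operator $\nu v$ onto the constant function, so that $\langle w \circ (\nu v), 1 \rangle = \langle w, (\nu v)\,1 \rangle$.

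Finally, since $v$ is a vector field, $v(1) = 0$, whence $(\nu v)\,1 = 0$ and $\langle w, 0 \rangle = 0$. Stringing these equalities together yields $K((\nu v -\nu v \varphi)^tu) = \langle (u\circ e^\varphi)\circ(\nu v), 1\rangle = 0$, as claimed.

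I do not expect a genuine obstacle here: the content is entirely carried by Lemma~\ref{L:vt}, and the corollary is essentially the observation that $K$ is annihilated by the ``integration by parts'' operators $(\nu v - \nu v\varphi)^t$, which is precisely the formal counterpart of condition~(\ref{E:FOI}) defining a FOI. The only point demanding even slight care is the bookkeeping of the transpose convention $A^t u = u \circ A$, so that transferring $\nu v$ across the pairing lands on the argument $1$ (where it vanishes) rather than producing a spurious adjoint or sign.
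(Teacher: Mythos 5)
Your proof is correct and coincides with the paper's own argument: the author likewise unwinds $K$ via its definition, applies Lemma~\ref{L:vt} to rewrite the distribution as $(u \circ e^\varphi)\circ(\nu v)$, and concludes from $\langle u \circ e^\varphi, (\nu v)\,1\rangle = 0$ since $(\nu v)\,1 = 0$. Your handling of the transpose convention $A^t u = u \circ A$ is exactly the bookkeeping the paper relies on, so there is nothing to add.
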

\begin{proof}
We have by Lemma \ref{L:vt} that
\begin{eqnarray*}
   K((\nu v -\nu v \varphi)^tu) = \langle ((\nu v -\nu v \varphi)^tu) \circ e^\varphi, 1 \rangle = \\
   \langle (u \circ e^\varphi) \circ (\nu v), 1 \rangle = \langle u \circ e^\varphi, (\nu v) 1 \rangle =0.
\end{eqnarray*}
\end{proof}
\begin{theorem}\label{T:ident}
Let $S :\Ncal \to \C[[\nu]]$ be a $\nu$-linear functional such that the equality
\[
   S\left((\nu v - \nu v\varphi)^tu\right)=0
\]
holds for any vector field $v$ and any $u \in \Ncal$. Then there exists a formal constant $c(\nu) \in \C[[\nu]]$ such that
\[
     S(u) = c(\nu)\langle u \circ e^\varphi, 1 \rangle.
\]
\end{theorem}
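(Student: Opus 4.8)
The plan is to reduce the statement to a uniqueness property of the ``evaluation at $1$'' functional by conjugating with the $e^\varphi$-action, and then to establish that property through an explicit description of $\Ncal$ in terms of derivatives of $\delta_\x$.

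First I would transfer the hypothesis through the bijection $u \mapsto u \circ e^\varphi$ of $\Ncal$. Since $\x$ is a critical point of $-\varphi_{-1}$ with zero critical value as well, the operation $\circ\, e^{-\varphi}$ is defined on $\Ncal$, and by Lemma \ref{L:compe} it is the inverse of $\circ\, e^\varphi$. Define a $\nu$-linear functional $\tilde S$ on $\Ncal$ by $\tilde S(w) := S(w \circ e^{-\varphi})$, so that $\tilde S(u \circ e^\varphi) = S(u)$. For any vector field $v$ and any $u \in \Ncal$, set $w := u \circ e^\varphi$; as $u$ runs over $\Ncal$ so does $w$. By Lemma \ref{L:vt} we have $\bigl((\nu v - \nu v\varphi)^t u\bigr) \circ e^\varphi = w \circ (\nu v)$, whence
\[
  \tilde S\bigl(w \circ (\nu v)\bigr) = S\bigl((\nu v - \nu v\varphi)^t u\bigr) = 0 .
\]
Thus the hypothesis on $S$ becomes the condition that $\tilde S$ annihilate every distribution of the form $w \circ (\nu v)$, while the desired conclusion $S(u) = c(\nu)\langle u \circ e^\varphi, 1\rangle$ becomes $\tilde S = c(\nu)\langle\,\cdot\,,1\rangle$. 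So it suffices to prove that a $\nu$-linear functional on $\Ncal$ vanishing on all $w \circ (\nu v)$ is a formal multiple of $w \mapsto \langle w,1\rangle$.

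The key step is a concrete description of $\Ncal$ and of the span $W'$ of the distributions $w \circ (\nu v)$. Working in local coordinates $\{x^i\}$ with $x^i(\x)=0$, write $e_\alpha := \nu^{|\alpha|}\,\delta_\x \circ \p^\alpha \in \Ncal$ for a multi-index $\alpha$. Every natural distribution has a unique expansion $u = \sum_\alpha b_\alpha(\nu)\, e_\alpha$ with $b_\alpha \in \C[[\nu]]$, naturalness being exactly what makes these sums lie in $\Ncal$ (for each power of $\nu$ only finitely many $\alpha$ contribute). Taking $w = e_\beta$ and $v = \p_i$ gives the exact identity $e_\beta \circ (\nu\, \p_i) = \nu^{|\beta|+1}\,\delta_\x \circ \p^{\beta + e_i} = e_{\beta + e_i}$, so $e_\gamma \in W'$ for every $\gamma \neq 0$. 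Since $\langle e_\alpha, 1\rangle = \delta_{\alpha,0}$ and $v(1)=0$ forces $W' \subset \ker\langle\,\cdot\,,1\rangle$, I obtain $\overline{W'} = \ker\langle\,\cdot\,,1\rangle$; and because $\langle \delta_\x, 1\rangle = 1$ this yields the topological direct sum decomposition $\Ncal = \C[[\nu]]\,\delta_\x \oplus \overline{W'}$.

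With this in hand the conclusion is immediate: put $c(\nu) := \tilde S(\delta_\x)$ and consider $T := \tilde S - c(\nu)\langle\,\cdot\,,1\rangle$. Then $T(\delta_\x)=0$ and $T$ vanishes on $W'$, hence on its closure $\overline{W'} = \ker\langle\,\cdot\,,1\rangle$; since these span $\Ncal$, we get $T \equiv 0$, that is $\tilde S = c(\nu)\langle\,\cdot\,,1\rangle$. Translating back through $\tilde S(u\circ e^\varphi)=S(u)$ gives $S(u) = c(\nu)\langle u \circ e^\varphi, 1\rangle$, as required. I expect the only delicate point to be the passage from $W'$ to its closure: it requires knowing that a $\nu$-linear functional on $\Ncal$ is continuous for the standard filtration topology, so that vanishing on the generators $e_\gamma$ with $\gamma\neq 0$ propagates to arbitrary convergent $\C[[\nu]]$-combinations of them. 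This is the step I would write out most carefully, justifying it by the same completeness argument (Lemma \ref{L:filtered} together with the remarks on the standard filtration) that underlies the rest of the paper.
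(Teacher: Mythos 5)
Your first half is exactly the paper's: your $\tilde S(w):=S(w\circ e^{-\varphi})$ is the paper's functional $T$, and transferring the hypothesis through Lemmas \ref{L:vt} and \ref{L:compe} to get $\tilde S(w\circ(\nu v))=0$ is the same computation the paper carries out directly. The genuine gap is in your final step, precisely at the point you flagged. You reduce the hypothesis to vanishing on the countable family $e_\gamma=\nu^{|\gamma|}\,\delta_\x\circ\p^\gamma$, $\gamma\neq 0$, and then pass from $W'$ to its closure $\overline{W'}=\ker\langle\,\cdot\,,1\rangle$, which requires $\tilde S$ (equivalently $S$) to be continuous for the topology in which the sums $\sum_\alpha b_\alpha e_\alpha$ converge. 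But the theorem assumes only that $S$ is $\nu$-linear, and continuity is neither assumed nor automatic: under your own identification $\Ncal\cong\prod_\alpha \C[[\nu]]$, a $\C[[\nu]]$-linear functional need not commute with these infinite sums. Indeed, the cyclic submodule generated by the class of $u_0=\sum_\alpha e_\alpha$ in $\Ncal/\mathrm{span}\{e_\alpha\}$ is free and pure, and $\C[[\nu]]$, being a complete discrete valuation ring, is pure-injective; so the functional sending $\bar u_0\mapsto 1$ extends to a nonzero $\nu$-linear functional on $\Ncal$ that kills every $e_\gamma$ \emph{and} $\delta_\x$. Your candidate $T=\tilde S-c(\nu)\langle\,\cdot\,,1\rangle$ could a priori be such a functional, so the argument does not close. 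Lemma \ref{L:filtered} concerns the filtered algebra $(\F,\star)$ and says nothing about functionals on $\Ncal$, so the completeness argument you propose cannot supply the missing continuity.

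The gap is repairable inside your framework, and the repair is exactly the paper's device. Your hypothesis is stronger than what you used: $\tilde S$ annihilates $w\circ(\nu v)$ for \emph{every} $w\in\Ncal$, not only for $w=e_\beta$. Given $u=\sum_{\alpha\neq 0}b_\alpha e_\alpha\in\ker\langle\,\cdot\,,1\rangle$, partition the multi-indices by the first index $i$ with $\alpha_i\geq 1$ into sets $A_i$; then
\[
    u=\sum_{i=1}^n w_i\circ(\nu \p_i), \qquad w_i:=\sum_{\alpha\in A_i} b_\alpha\, e_{\alpha-e_i}\in\Ncal
\]
(extending $\p_i$ off the chart by a cutoff, which is harmless since the distributions see only jets at $\x$). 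This exhibits $\ker\langle\,\cdot\,,1\rangle$ as equal to $W'$ via \emph{finite} sums, so no closure and no continuity are needed: $T$ vanishes on it outright, and $\Ncal=\C[[\nu]]\delta_\x\oplus\ker\langle\,\cdot\,,1\rangle$ finishes the proof. This grouping is the coordinate form of the paper's decomposition $N=N1+A^p\circ\left(\nu\frac{\p}{\p z^p}\right)+B^q\circ\left(\nu\frac{\p}{\p\bar z^q}\right)$ with $A^p,B^q\in\mathfrak{N}_\x$, from which $T(u)=(N1)(\x)\,T(\delta_\x)=\langle u,1\rangle\,T(\delta_\x)$ follows by finitely many applications of the hypothesis. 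With that substitution your argument becomes correct and essentially coincides with the paper's.
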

\begin{proof}
Consider a functional $T: \Ncal \to \C[[\nu]]$ given by the formula
\[
     T(u) := S(u \circ e^{-\varphi}).
\]
We will show that $T\left((\nu v)^tu\right)=0$ for any vector field $v$ and any $u \in \Ncal$. Let $N \in \mathfrak{N}_\x$ be such that $u = \delta_\x \circ N$. Given a vector field $v$ and $u \in \Ncal$, we have
\begin{eqnarray*}
  T\left((\nu v)^tu\right) = S(((\nu v)^t u) \circ e^{-\varphi}) = S((\delta_\x \circ (N \circ (\nu v)) \circ e^{-\varphi})=\\
  S(e^{-\varphi(\x)}\delta_\x \circ (e^{\varphi} (N \circ (\nu v)) e^{-\varphi} )) = \hskip 3.8cm\\
  S(e^{-\varphi(\x)}\delta_\x \circ (e^{\varphi} N e^{-\varphi}  \circ (\nu v - \nu v \varphi)))=\hskip 3.1cm\\
  S((u \circ e^{-\varphi}) \circ ((\nu v - \nu v \varphi))) = S((\nu v - \nu v \varphi)^t (u \circ e^{-\varphi})) =0.
\end{eqnarray*}
In local coordinates one can write any operator $N \in \mathfrak{N}_\x$ as
\[
    N = f + A^p \circ \left(\nu \frac{\p}{\p z^p}\right) + B^q \circ \left(\nu \frac{\p}{\p \bar z^q}\right),
\]
where $f=N1 \in \C[[\nu,z, \bar z]]$ and $A^p, B^q \in \mathfrak{N}_\x$. Then for $u = \delta_\x \circ N$ we have
\begin{eqnarray*}
     T(u) = T\left(\delta_\x \circ \left(f + A^p \circ \left(\nu \frac{\p}{\p z^p}\right) + B^q \circ \left(\nu \frac{\p}{\p \bar z^q}\right)
     \right)\right)=\\
    f(\x) T(\delta_\x) + T\left(\left(\nu \frac{\p}{\p z^p}\right)^t (\delta_\x \circ A^p)\right) + \hskip 1cm\\
     T\left(\left(\nu \frac{\p}{\p \bar z^q}\right)^t (\delta_\x \circ B^q)\right) = f(\x) T(\delta_\x).
\end{eqnarray*}
It follows that $T(u) = T(\delta_\x) \langle u, 1 \rangle$. We set $c(\nu) := T(\delta_\x)$. Using Lemma~\ref{L:compe}, we get that
\[
    S(u) = T(u \circ e^\varphi) = c(\nu) \langle u \circ e^\varphi, 1 \rangle.
\]
\end{proof}

Let $\x$ be a point in $M$, $U$ be a contractible coordinate chart with coordinates $\{z^p, \bar z^q\}$ such that $z^p(\x)=\bar z^q(\x)=0$ for all $p,q$, and $\Phi$ be a potential of the classifying form $\omega$ of the star product $\star$ on $U$. We choose an almost analytic extension $\tilde\Phi$ of $\Phi$ on $U \times \widebar{U}$.
In Section \ref{S:sep} we introduced the cyclic function
\[
     G^{(l)}(x_1,\ldots, x_l) = \tilde\Phi(x_1,x_2) + \ldots  + \tilde \Phi(x_l,x_1)   
     - (\Phi(x_1) + \ldots +  \Phi(x_l))
\]
on the neighborhood $U^l$ of the diagonal point $(\x)^l$ of $M^l$. The jet of the function $G^{(l)}$ at $(\x)^l \in M^l$ is given in local coordinates by the formula
\begin{eqnarray*}
G^{(l)}(z, \bar z) = \Phi(z_1, \bar z_2) + \Phi(z_2, \bar z_3) + \ldots + \Phi(z_l, \bar z_1)\hskip 2cm\\
- (\Phi(z_1,\bar z_1) + \ldots + \Phi(z_l, \bar z_l)) \in \nu^{-1}\C[[\nu, z_1, \bar z_1, \ldots z_l, \bar z_l]],
\end{eqnarray*}
where we have used the notations  $z = (z_1, \ldots, z_l), z_i = (z_i^1, \ldots, z_i^m), \bar z = (\bar z_1, \ldots, \bar z_l), \bar z_j = (\bar z_j^1, \ldots, \bar z_j^m),$ and $m = \dim_{\C} M$. This is the jet of the formal $l$-point Calabi function of $\omega$ at $(\x)^l$.
\begin{lemma}\label{L:diagp}
The diagonal point $(\x)^l \in M^l$ is a critical point of the function $G^{(l)}$ with zero critical value.
\end{lemma}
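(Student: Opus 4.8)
The plan is to verify both claims by a direct computation with the explicit jet of $G^{(l)}$ at $(\x)^l$ displayed just above the statement; since $G^{(l)}$ enters everything that follows only through this jet, it suffices to argue with the formal coordinate expression
\[
   G^{(l)} = \sum_{i=1}^l \Phi(z_i, \bar z_{i+1}) - \sum_{i=1}^l \Phi(z_i, \bar z_i),
\]
where the indices are read cyclically modulo $l$ (so that $\bar z_{l+1} := \bar z_1$ and $z_0 := z_l$) and $\Phi(z_i, \bar z_j)$ denotes, as above, the jet of the almost analytic extension $\tilde\Phi$ with holomorphic coordinates taken from the $i$-th factor and antiholomorphic coordinates from the $j$-th factor. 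In particular $\Phi(z_i, \bar z_j)$ depends only on $z_i$ and $\bar z_j$.

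First I would check the zero critical value. Setting all $z_i = \bar z_i = 0$ corresponds to evaluating at $(\x)^l$, and since $\tilde\Phi(x,x) = \Phi(x)$, each of the $l$ cyclic terms and each of the $l$ diagonal terms collapses to the same formal constant $\Phi(\x) \in \nu^{-1}\C[[\nu]]$. The two sums therefore cancel and $G^{(l)}((\x)^l) = 0$.

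Next I would check that $(\x)^l$ is a critical point by showing that every first-order partial derivative vanishes there. The key structural observation is that, by the cyclic shape of $G^{(l)}$ together with the fact that $\Phi(z_i, \bar z_j)$ is holomorphic in the first slot and antiholomorphic in the second, each holomorphic coordinate $z_k^p$ occurs in exactly one cyclic term, namely $\Phi(z_k, \bar z_{k+1})$, and in exactly one diagonal term, namely $\Phi(z_k, \bar z_k)$. Hence
\[
  \frac{\partial G^{(l)}}{\partial z_k^p}
  = \frac{\partial \Phi}{\partial z^p}(z_k, \bar z_{k+1})
  - \frac{\partial \Phi}{\partial z^p}(z_k, \bar z_k),
\]
and evaluating at $(\x)^l$ sends both arguments of each term to the origin, so the two contributions coincide and cancel. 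The identical bookkeeping applied to an antiholomorphic coordinate $\bar z_k^q$, which appears only in the cyclic term $\Phi(z_{k-1}, \bar z_k)$ and the diagonal term $\Phi(z_k, \bar z_k)$, gives $\partial G^{(l)}/\partial \bar z_k^q = 0$ at $(\x)^l$ in the same way. Since these exhaust the first derivatives, $(\x)^l$ is a critical point.

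I do not expect a genuine obstacle here; the only points requiring care are the cyclic index bookkeeping in the two sums and the observation that, because $\Phi$ carries a $\nu^{-1}$ term, the phrase \emph{critical point with zero critical value} is to be understood formally, coefficient by coefficient in $\nu$, which is precisely how the cancellations above take place. No error terms from $\bar\p \tilde\Phi$ intrude, since these already vanish to infinite order along the diagonal and hence do not enter the jet of $G^{(l)}$ at $(\x)^l$.
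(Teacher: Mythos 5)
Your proof is correct and follows essentially the same route as the paper: the paper also argues directly from the coordinate expression of the jet, obtaining exactly your formulas $\frac{\p G^{(l)}}{\p z_i^p} = \frac{\p \Phi}{\p z^p}(z_i, \bar z_{i+1}) - \frac{\p \Phi}{\p z^p}(z_i, \bar z_i)$ and its antiholomorphic counterpart with the same cyclic identifications, and concluding that both differences vanish at $(\x)^l$. Your additional remarks on the cancellation giving the zero critical value and on the irrelevance of the $\bar\p\tilde\Phi$ error terms are sound and merely make explicit what the paper treats as clear.
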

\begin{proof}
Clearly, $G^{(l)}((\x)^l)=0$. In local coordinates,
\begin{eqnarray}\label{E:diagp}
    \frac{\p G^{(l)}}{\p z_i^p} = \frac{\p \Phi}{\p z^p}(z_i, \bar z_{i+1}) - \frac{\p \Phi}{\p z^p}(z_i, \bar z_i)   \mbox{ and } \\
    \frac{\p G^{(l)}}{\p \bar z_j^q} = \frac{\p \Phi}{\p \bar z^q}(z_{j-1}, \bar z_j) - \frac{\p \Phi}{\p \bar z^q}(z_j, \bar z_j), \nonumber 
\end{eqnarray}
where we identify $\bar z_{l+1}$ with $\bar z_1$ and $z_0$ with $z_l$. Therefore,
\[
    \frac{\p G^{(l)}}{\p z_i^p}((\x)^l) = 0 \mbox{ and } \frac{\p G^{(l)}}{\p \bar z_j^q}((\x)^l) =0.
\]
\end{proof}
\noindent {\it Remark.} The point $(\x)^l \in M^l$ is a degenerate critical point of the function $G^{(l)}$, but it is a nondegenerate critical point of the function~(\ref{E:fgl}).
\begin{theorem}\label{T:main}
 The following identity holds for any natural distributions $u_1, \ldots, u_m \in \Ncal$, 
 \begin{equation}\label{E:main}
     \langle u_1 \bullet \ldots \bullet u_l , 1 \rangle = \langle (u_1 \otimes \ldots \otimes u_l)\circ \exp G^{(l)}, 1 \rangle. 
 \end{equation}
\end{theorem}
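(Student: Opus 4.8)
The plan is to invoke the uniqueness Theorem~\ref{T:ident}, but applied on the \emph{product} manifold $M^l$ at the diagonal point $(\x)^l$ with phase $\varphi = G^{(l)}$. By Lemma~\ref{L:diagp} this phase has a critical point of zero critical value at $(\x)^l$, so the whole composition calculus $w \mapsto w\circ e^{G^{(l)}}$ of Section~\ref{S:algd}, together with Theorem~\ref{T:ident}, is available for the space $\Ncal^{(l)}$ of natural distributions supported at $(\x)^l$, whose factorizable elements $u_1\otimes\cdots\otimes u_l$ topologically generate it. The right-hand functional $\Phi_2(w) := \langle w\circ e^{G^{(l)}}, 1\rangle$ is, by the Corollary preceding Theorem~\ref{T:ident}, annihilated by every element of the form $(\nu V - \nu V G^{(l)})^t w$ for a vector field $V$ on $M^l$, and it is normalized by $\Phi_2(\delta_{(\x)^l}) = e^{G^{(l)}((\x)^l)} = 1$. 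Hence it is the unique such functional up to the formal constant supplied by Theorem~\ref{T:ident}.

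First I would record the left-hand side as an explicit cyclic quantity. Writing $u_i = \lambda(G_i)$ with factorizable $G_i = a_i\otimes b_i \in \G$ (Corollary~\ref{C:restl}), the algebra isomorphism $\lambda|_\G\colon (\G,\ast)\xrightarrow{\sim}(\Ncal,\bullet)$ of Lemma~\ref{L:isomalg} gives $u_1\bullet\cdots\bullet u_l = \lambda(G_1\ast\cdots\ast G_l)$, and the explicit form of $\ast$ together with (\ref{E:lambdatrace}) and the trace (\ref{E:trace}) telescope to
\[
\langle u_1\bullet\cdots\bullet u_l, 1\rangle = \tr(G_1\ast\cdots\ast G_l) = \prod_{i=1}^{l} (b_i\star a_{i+1})(\x), \qquad a_{l+1}:=a_1.
\]
Extended by $\nu$-linearity and filtration-continuity, this defines the left-hand functional $\Phi_1$ on the factorizable elements of $\Ncal^{(l)}$. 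Its normalization matches $\Phi_2$: since $\delta_\x = \lambda(1)$ for the constant jet $1\in\G$, which is $\ast$-idempotent with $(1\star1)(\x)=1$, one has $\delta_{(\x)^l}=\delta_\x\otimes\cdots\otimes\delta_\x$ and $\Phi_1(\delta_{(\x)^l}) = \langle\delta_\x\bullet\cdots\bullet\delta_\x,1\rangle = \langle\delta_\x,1\rangle = 1$.

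The heart of the argument is then to show that $\Phi_1$ satisfies the same defining identity $\Phi_1((\nu V - \nu V G^{(l)})^t w)=0$; uniqueness in Theorem~\ref{T:ident} would then force $\Phi_1 = \Phi_2$, giving (\ref{E:main}) on factorizable elements and hence everywhere by continuity. By multilinearity and density it suffices to take $w = u_1\otimes\cdots\otimes u_l$ and $V = v$ acting on a single factor $x_j$. The term $(\nu v)^t w$ modifies only the $j$-th tensorand, so $\Phi_1((\nu v)^t w)$ is again a cyclic $\bullet$-trace and is controlled by the calculus of Lemma~\ref{L:vt} together with the local operator form $L_{\partial\Phi/\partial z^k}=\partial\Phi/\partial z^k + \partial/\partial z^k$ (and its antiholomorphic analogue). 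The genuine difficulty is the coupling term $(\nu v G^{(l)})^t w = w\circ\mathrm{mult}(\nu v G^{(l)})$, because, by (\ref{E:diagp}),
\[
\nu\,\partial_{z_j^p} G^{(l)} = \nu\Bigl(\tfrac{\partial\Phi}{\partial z^p}(z_j,\bar z_{j+1}) - \tfrac{\partial\Phi}{\partial z^p}(z_j,\bar z_j)\Bigr)
\]
mixes the neighbouring factors $x_j$ and $x_{j+1}$ and cannot be localized to a single tensorand.

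The hardest step, which I expect to be the crux, is to show that this coupling term is exactly absorbed by the cyclic $\bullet$-trace. Concretely, I would verify that multiplication by the function $\nu\,\partial_{z^p}\Phi(z_j,\bar z_{j+1})$, holomorphic in $z_j$ and antiholomorphic in $\bar z_{j+1}$, passes through the $\calC$-contraction on the edge joining factors $j$ and $j+1$ as an $L$/$R$ multiplication, while the diagonal term $-\nu\,\partial_{z^p}\Phi(z_j,\bar z_j)$ cancels against the $\partial/\partial z^p$ produced by $L_{\partial\Phi/\partial z^p}$. This reduces the required identity to the defining local relations of the anti-Wick star product in terms of the potential $\Phi$ and the identity $B(ab)=b\star a$, after which the uniqueness of Theorem~\ref{T:ident} closes the argument. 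The persistent technical burden is to keep the completed-tensor and filtration bookkeeping honest, so that the single-factor reductions and the passage from factorizable elements to all of $\Ncal^{(l)}$ are legitimate.
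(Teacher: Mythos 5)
Your proposal is correct and follows essentially the same route as the paper's proof: compute the left-hand side as the cyclic product $\prod_i (g_i \star f_{i+1})(\x)$ on factorizable generators, match the normalization at $\delta_{(\x)^l}$ using $G^{(l)}((\x)^l)=0$, verify the annihilation identities of Theorem~\ref{T:ident} on $M^l$ by splitting $\nu\,\p G^{(l)}/\p z_j^p$ via (\ref{E:diagp}) into the diagonal term, which combines with $\nu\,\p/\p z_j^p$ into $\nu L_{\p\Phi/\p z^p}$ acting on a single tensorand, and the coupling term, which the paper handles exactly as you sketch --- by a filtration-convergent expansion $\nu\,\p\Phi/\p z^p(z_j,\bar z_{j+1})=\sum_\alpha a_\alpha(z_j) b_\alpha(\bar z_{j+1})$ absorbed by the cyclic trace through $a_\alpha \star b_\alpha = a_\alpha b_\alpha$. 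The only difference is that you leave this last expansion at the level of a plan while the paper executes it, but the mechanism you describe is precisely the paper's.
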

\noindent {\it Remark.} Observe that the left-hand side of (\ref{E:main}) is the trace of the product $u_1 \bullet \ldots \bullet u_l$ in the algebra  $(\Ncal, \bullet)$, which agrees with the fact that $G^{(l)}$ is cyclic. The distribution $u_1 \otimes \ldots \otimes u_l$ on the right-hand side is a natural distribution on $M^l$ supported at $(\x)^l$. Since the natural distributions $u_1, \ldots, u_l$ are arbitrary, the element $\exp G^{(l)}$ is completely (though not explicitly) determined in terms of the star product $\star$.
\begin{proof}
We introduce a functional $W^{(l)}$ on the space of natural distributions on $M^l$ supported at the point $(\x)^l$ by the formula
\[
     W^{(l)}(u_1 \otimes \ldots \otimes u_l):= \langle u_1 \bullet \ldots \bullet u_l , 1 \rangle. 
\]
Suppose that $u_i = \lambda(f_i \otimes g_i)$ for $1 \leq i \leq l$, where $f_i,g_i \in \F$ are arbitrary. Then, by formula (\ref{E:factx}),
\begin{eqnarray*}
   W^{(l)}(u_1 \otimes \ldots \otimes u_l) =  (g_1 \star f_2)(\x) \cdot (g_2 \star f_3)(\x)\cdot  \ldots (g_l \star f_1)(\x).
\end{eqnarray*}
Observe that $\delta_\x = \lambda(1 \otimes 1)$ and $\delta_\x \bullet \delta_\x=\delta_\x$.  Clearly, 
\[
W^{(l)}\left (\delta_{(\x)^l}\right )=1 \mbox{ and }  \langle \delta_{(\x)^l}\circ \exp G^{(l)}, 1 \rangle =  \langle \delta_{(\x)^l}, 1 \rangle = 1, 
\]
where we have used that $\delta_{(\x)^l} = \delta_\x \otimes \ldots \otimes \delta_\x$ and $G^{(l)}((\x)^l)=0$. According to Theorem~\ref{T:ident}, in order to prove formula (\ref{E:main}) it remains to verify that for any $i,j,p,q$,
\begin{equation}\label{E:annih}
    W^{(l)} \circ \left(\nu \frac{\p}{\p z_i^p} - \nu \frac{\p G^{(l)}}{\p z_i^p} \right)^t = 0 \mbox{ and } W^{(l)} \circ \left(\nu \frac{\p}{\p \bar z_j^q}- \nu \frac{\p G^{(l)}}{\p \bar z_j^q}\right)^t = 0.
\end{equation}
We will check the first equality on the elements $u_1 \otimes \ldots \otimes u_l$ with $u_i = \lambda(f_i \otimes g_i)$, which topologically generate $\Ncal$. We use formula (\ref{E:diagp}) to calculate the action of 
\[
    \left( \nu \frac{\p}{\p z_i^p}- \nu \frac{\p G^{(l)}}{\p z_i^p} \right)^t = \left( \nu \frac{\p}{\p z_i^p} - \nu \frac{\p \Phi}{\p z^p}(z_i, \bar z_{i+1}) + \nu \frac{\p \Phi}{\p z^p}(z_i, \bar z_i)  \right)^t 
\]
on $u_1 \otimes \ldots \otimes u_l$. The operator
\[
    \left(\nu \frac{\p}{\p z_i^p} + \nu \frac{\p \Phi}{\p z^p}(z_i, \bar z_i)\right)^t
\]
acts only on the factor $u_i$ in $u_1 \otimes \ldots \otimes u_l$. We have
\begin{eqnarray*}
   \left\langle \left(\nu \frac{\p}{\p z^p} + \nu \frac{\p \Phi}{\p z^p}\right)^t u_i, h\right\rangle =
    \left\langle u_i, \nu\frac{\p \Phi}{\p z^p} \star h \right\rangle =\hskip 2cm\\
     \left(g_i \star \nu\frac{\p \Phi}{\p z^p} \star h \star f_i\right)(\x) =\left\langle \lambda\left(f_i \otimes \left(g_i \star \nu\frac{\p \Phi}{\p z^p}\right)\right), h \right\rangle.
\end{eqnarray*}
We introduce an element
\[
   \hat u_i := \left(\nu \frac{\p}{\p z^p}+ \nu \frac{\p \Phi}{\p z^p}\right)^t u_i = \lambda\left(f_i \otimes \left(g_i \star \nu\frac{\p \Phi}{\p z^p}\right)\right).
\]
We get
\begin{eqnarray*}
   W^{(l)}(u_1 \otimes \ldots \otimes \hat u_i \otimes \ldots \otimes u_l) =
   \langle u_1 \bullet \ldots \bullet \hat u_i \bullet \ldots \bullet u_l, 1 \rangle=\\
    (g_1 \star f_2)(\x) \cdot (g_2 \star f_3)(\x)\cdot  \ldots \cdot (g_{i-1} \star f_i)(\x) \cdot\hskip 2cm\\
\left(g_i \star \nu\frac{\p \Phi}{\p z^p} \star f_{i+1}\right)(\x) \cdot (g_{i+1} \star f_{i+2})(\x) \cdot\ \ldots \cdot (g_l \star f_1)(\x).
\end{eqnarray*}
It remains to calculate
\[
    W^{(l)} \left(\left(\nu\frac{\p \Phi}{\p z^p}(z_i, \bar z_{i+1})\right)^t (u_1 \otimes \ldots \otimes u_l) \right).
\]
The jet $\nu\frac{\p \Phi}{\p z^p}(z_i, \bar z_{i+1})$ can be expressed as the following series convergent in the topology associated to the standard filtration,
\[
    \nu\frac{\p \Phi}{\p z^p}(z_i, \bar z_{i+1}) = \sum_\alpha a_\alpha(z_i) b_\alpha(\bar z_{i+1}).
\]
We have
\begin{eqnarray*}
\left\langle \left(\nu\frac{\p \Phi}{\p z^p}(z_i, \bar z_{i+1})\right)^t (u_1 \otimes \ldots \otimes u_l),  h_1 \otimes \ldots \otimes h_l\right\rangle =\hskip 1cm\\
\left\langle u_1 \otimes \ldots \otimes u_l, \nu\frac{\p \Phi}{\p z^p}(z_i, \bar z_{i+1}) (h_1 \otimes \ldots \otimes h_l)\right\rangle =\hskip 1cm\\
\left\langle u_1 \otimes \ldots \otimes u_l, \left(\sum_\alpha a_\alpha(z_i) b_\alpha(\bar z_{i+1})\right) (h_1 \otimes \ldots \otimes h_l)\right\rangle =
\\
\sum_\alpha \Big( (g_1 \star h_1 \star f_1)(\x) \cdot \ldots (g_i \star a_\alpha(z_i) \star h_i \star f_i)(\x) \cdot \hskip 1cm\\
(g_{i+1} \star h_{i+1} \star b_\alpha(\bar z_{i+1}) \star f_{i+1})(\x) \cdot \ldots \cdot (g_l \star h_l \star f_l)(\x) \Big)=\\
\left\langle \sum_\alpha u_1 \otimes \ldots \hat u_{i \alpha} \otimes \hat u_{i+1 \alpha} \otimes \ldots\otimes u_l, h_1 \otimes  \ldots \otimes h_l \right\rangle,
\end{eqnarray*}
where $\hat u_{i \alpha}=\lambda(f_i \otimes (g_i \star a_\alpha))$ and $\hat u_{i+1\alpha}=\lambda((b_\alpha \star f_{i+1}) \otimes g_{i+1})$. We have thus proved that
\[
     \left(\nu\frac{\p \Phi}{\p z^p}(z_i, \bar z_{i+1}) \right)^t (u_1 \otimes \ldots \otimes u_l) =
     \sum_\alpha u_1 \otimes \ldots \hat u_{i \alpha} \otimes \hat u_{i+1 \alpha} \otimes \ldots\otimes u_l.
 \]
Now,
\begin{eqnarray*}
 W^{(l)}\left(\left(\nu\frac{\p \Phi}{\p z^p}(z_i, \bar z_{i+1}) \right)^t(u_1  \otimes \ldots \otimes u_m)\right) =\hskip 3.2cm \\
 W^{(l)}\left(\sum_\alpha u_1 \otimes \ldots \hat u_{i \alpha} \otimes \hat u_{i+1 \alpha} \otimes \ldots\otimes u_m\right)=\hskip 3cm\\
 \sum_\alpha \big((g_1 \star f_2)(\x) \cdot  \ldots \cdot (g_{i-1}\star f_i)(\x) \cdot 
 (g_i \star a_\alpha \star b_\alpha \star f_{i+1})(\x) \cdot \\
  (g_{i+1}\star f_{i+2})(\x) \cdot
 \ldots \cdot (g_l \star f_1)(\x)\big ).
 \end{eqnarray*}
 We see that
 \[
     \sum_\alpha (g_i \star a_\alpha \star b_\alpha \star f_{i+1})(\x) = \left(g_i \star \nu\frac{\p \Phi}{\p z^p} \star f_{i+1}\right)(\x),
 \]
because $a_\alpha \star b_\alpha = a_\alpha b_\alpha$. Hence,
 \begin{eqnarray*}
  W^{(l)}\left(\left(\nu\frac{\p \Phi}{\p z^p}(z_i, \bar z_{i+1})\right)^t (u_1 \otimes  \ldots \otimes u_l)\right) =\hskip 1.5cm\\
  W^{(l)}\left (\left(\nu \frac{\p}{\p z_i^p} + \nu \frac{\p \Phi}{\p z^p}(z_i, \bar z_i)\right)^t (u_1 \otimes  \ldots \otimes u_l)\right),
\end{eqnarray*}
which proves the first equality in (\ref{E:annih}). The second one can be checked similarly. 
\end{proof}
We have thus proved that formula (\ref{E:main}) allows to express the jet of $\exp G^{(l)}$ at $(\x)^l$ in terms of the algebra $(\Ncal, \bullet)$ for every $l \geq 1$.

\end{document}